\documentclass{amsart}
\usepackage{MnSymbol}
\usepackage{scalerel}

\usepackage{hyperref}
\hypersetup{
    colorlinks,
    citecolor=black,
    filecolor=black,
    linkcolor=black,
    urlcolor=black
}
\usepackage{comment}

\DeclareFontFamily{OT1}{pzc}{}
\DeclareFontShape{OT1}{pzc}{m}{it}{<-> s * [1.100] pzcmi7t}{}
\DeclareMathAlphabet{\mathpzc}{OT1}{pzc}{m}{it}

    \usepackage{etoolbox}
    \patchcmd{\section}{\scshape}{\large\bfseries}{}{}
    \makeatletter
    \renewcommand{\@secnumfont}{\bfseries}
    \makeatother

\usepackage{tikz-cd}
\tikzcdset{arrow style=math font}
\tikzcdset{scale cd/.style={every label/.append style={scale=#1},
    cells={nodes={scale=#1}}}}

\usepackage[maxbibnames=99]{biblatex}
\usepackage{booktabs}
\bibliography{references}

\numberwithin{equation}{section}
\newtheorem{theorem}{Theorem}[section]
\newtheorem*{theorem*}{Theorem}
\newtheorem{corollary}[theorem]{Corollary}
\newtheorem{lemma}[theorem]{Lemma}
\newtheorem{proposition}[theorem]{Proposition}
\theoremstyle{definition}

\newtheorem*{question*}{Question}

\newtheorem{remark}[theorem]{Remark}

\def\ZZ{\mathbb{Z}}
\def\AA{\mathcal{A}}
\def\CC{\mathcal{C}}
\def\mono{\rightarrowtail}
\def\epi{\twoheadrightarrow}

\def\Ker{\mathrm{Ker}}
\def\Im{\mathrm{Im}}
\def\Coker{\mathrm{Coker}}

\let\oldtocsection=\tocsection 
\let\oldtocsubsection=\tocsubsection 
\renewcommand{\tocsection}[2]{\hspace{0mm}\oldtocsection{#1}{#2}}
\renewcommand{\tocsubsection}[2]{\hspace{1em}\oldtocsubsection{#1}{#2}}

\title[Transfinite version of the Mittag-Leffler condition]{Transfinite version of the Mittag-Leffler condition for the vanishing of the derived limit}

\author{Mishel Carelli} 
\email{mishelcarelli@gmail.com, mishelc@campus.technion.ac.il}
\address{Technion - Israel Institute of Technology}

\author{Sergei O. Ivanov} 
\email{ivanov.s.o.1986@gmail.com, ivanov.s.o.1986@bimsa.cn}
\address{
Beijing Yanqi Lake Institute of Applied Mathematics (BIMSA)}

\thanks{The first named author is supported by Department of Mathematics,
The Technion, Haifa, Israel.
The second named author is supported by Yanqi Lake Beijing Institute of Mathematical Sciences and Applications (BIMSA)}

\begin{document}

\begin{abstract} 
We give a necessary and sufficient condition for an inverse sequence $S_0 \leftarrow S_1 \leftarrow \dots$ indexed by natural numbers to have ${\rm lim}^1S=0$. This condition can be treated as a transfinite version of the Mittag-Leffler condition. We consider inverse sequences in an arbitrary abelian category having a generator and satisfying Grothendieck axioms ${\rm (AB3)}$ and ${\rm (AB4^*)}.$ We also show that the class of inverse sequences $S$ such that ${\rm lim}\: S={\rm lim}^1 S=0$ is the least class of inverse sequences containing the trivial inverse sequence and closed with respect to small limits and a certain type of extensions.
\end{abstract}

\maketitle

\section*{Introduction}

The study of the right derived functors of the functor of limit was initiated in the works of Yeh, Milnor, Roos and Grothendieck \cite{yeh1959higher, milnor1962axiomatic,roos1961foncteurs,grothendieck1961ega} (see also \cite{nobeling1962derivierten,jensen1970vanishing,laudal1965limites}). In particular, it was shown that for inverse sequences of abelian groups, $\lim^i=0$ for $i>1$, but the functor $\lim^1$ turned out to be non-trivial. This functor is referred to as the derived limit. Milnor emphasized the significant role of this functor in algebraic topology by introducing what is now known as Milnor exact sequence for homotopy groups. It was also proven that if the inverse sequence consists of epimorphisms, the derived limit is trivial. Furthermore, Grothendieck in \cite{grothendieck1961ega} introduced the Mittag-Leffler condition for an inverse sequence, which generalizes the epimorphism condition, and proved that it also implies the vanishing of the derived limit.

If all components of an inverse sequence are at most countable abelian groups, then the Mittag-Leffler condition becomes necessary and sufficient for the vanishing of the derived limit. However, for arbitrary inverse sequences of abelian groups, it is not a necessary condition. There arose a need to find a necessary and sufficient variant of this condition. In \cite{emmanouil1996mittag}, Emmanouil showed that for an inverse sequence $S$ consisting of $R$-modules, the Mittag-Leffler condition is equivalent to $\mathrm{lim}^1(S\otimes_R M)=0$ for any $R$-module $M$. However, we took a different approach and began to study how the Mittag-Leffler condition can be modified to become both necessary and sufficient for the derived limit to be zero.

It is meaningful to consider the derived limit in arbitrary abelian categories with small limits. From now on, we will always assume that $\mathcal{A}$ is an abelian category with small direct sums (Grothendieck axiom ${\rm AB3}$), exact small products (Grothendieck axiom ${\rm AB4^*}$), and a generator. Roos proved that in such a category, the derived limit of an epimorphic inverse sequence is trivial \cite{roos2006derived}. Neeman showed that the assumption of having a generator in $\AA$ cannot be omitted \cite{neeman2002counterexample}.

Our work is devoted to description of inverse sequences with trivial derived limit in the category $\AA$. The main result of our work consists of providing a necessary and sufficient condition for the vanishing of the inverse limit, which can be interpreted as a transfinite version of the Mittag-Leffler condition. Among the inverse sequences with a zero derived limit, a special place is occupied by the inverse sequences $L$ for which $\lim L = \lim^1 L = 0$. We call such inverse sequences local. The second main result of this work is that we describe the class of local inverse sequences as the least class containing the trivial inverse sequence and closed under small limits and certain extensions. In this work, we deliberately limit ourselves to considering only inverse sequences indexed by natural numbers, without considering more general posets or categories as indices.

For an inverse sequence  $S$ we denote by $I(S)$ an inverse sequence, consisting of the images $I(S)_i = \Im(S_{i+1} \to S_i).$ Furthermore, we define $I^n(S)$ recursively as $I^{n}(S)=I(I^{n-1}(S)).$ An inverse sequence $S$ satisfies the Mittag-Leffler condition if, for any given $i$, the decreasing sequence of subobjects $S_i \supseteq I^1(S)_i \supseteq I^2(S)_i \supseteq \dots$ stabilizes. This condition is sufficient for the vanishing of the derived limit, but it easy to construct an example that shows that it is not necessarily. For example, if we denote by $\mathbb{Z}_p$ the group of $p$-adic integers, then the inverse sequence $p^i \mathbb{Z}_p$ has a trivial derived limit but it does not satisfy the Mittag-Leffler condition. More generally, if $A=A_0\supseteq A_1 \supseteq ...$ is a complete Hausdorff filtration on an abelian group $A$ (i.e. $A \cong \lim A/A_i$), then $\lim^1 A_i=0$. 

If we consider the completion of an inverse sequence $S$ with respect to the image filtration 
\[ \widehat{S}= \underset{n}{\lim}\: S/I^n(S) \]
then in both the case of the Mittag-Leffler condition and the case of a complete Hausdorff filtration, the morphism of inverse sequences $S \to \widehat{S}$ is an epimorphism. We prove that this morphism is an epimorphism for any inverse sequence with a trivial derived limit. However, it was insufficient for us to consider this completion for formulating a necessary and sufficient condition. So we introduced the concept of $\lambda$-completions of an inverse sequence for any limit ordinal $\lambda$.

For an ordinal $\alpha$ and an inverse sequence $S$, we define $I^\alpha(S)$ such that 
$
I^{\alpha+1}(S)=I(I^\alpha(S))$  and $I^\lambda(S)=\lim_{\alpha<\lambda} I^\alpha(S)$ for a limit ordinal $\lambda.$
It turns out that, despite the inverse sequence being indexed by ordinary natural numbers, this transfinite filtration can stabilize at any ordinal. The $\lambda$-completion of the inverse sequence $S$ is defined as
\[ \widehat S^\lambda = \lim_{\alpha<\lambda} S/I^\alpha(S).\]
We say that an inverse sequence $S$ is \emph{$\lambda$-complete}, if the morphism $S\to \widehat{S}^\lambda$ is an epimorphism. 
The main result of our work is the following theorem (Th. \ref{th:gen_ML}).
\begin{theorem*} Let $\AA$ be an abelian category with small direct sums, exact small products, and a generator. Then for an inverse sequence $S$ in $\AA$ the following statements are equivalent:
\begin{enumerate}
\item ${\lim}^1S=0;$
\item  $\lim {\rm Coker}(S\to \widehat S^\lambda)=0$ for any limit ordinal $\lambda$;
\item for a limit ordinal $\lambda,$ if cofinality of $\lambda$ is countable, then $S$ is $\lambda$-complete, if the cofinality of $\lambda$ is uncountable, then  $\lim {\rm Coker}(S\to \widehat S^{\lambda})=0.$
\end{enumerate}
\end{theorem*}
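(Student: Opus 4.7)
The plan is to prove (1) $\Rightarrow$ (2) $\Rightarrow$ (3) $\Rightarrow$ (1), pivoting on a preliminary identification of $\mathrm{Coker}(S \to \widehat{S}^\lambda)$. Fix a limit ordinal $\lambda$ and apply $\lim_{\alpha<\lambda}$ componentwise to the short exact sequence $0 \to I^\alpha(S) \to S \to S/I^\alpha(S) \to 0$ of $\omega$-indexed inverse sequences. Since $S$ is constant in $\alpha$ and the indexing category $\lambda^{\mathrm{op}}$ is a directed poset (so higher derived limits of its constant diagrams vanish), the long exact sequence produces a natural four-term exact sequence
\[
0 \to I^\lambda(S) \to S \to \widehat{S}^\lambda \to {\lim}^1_{\alpha<\lambda} I^\alpha(S) \to 0,
\]
identifying $\mathrm{Coker}(S\to\widehat{S}^\lambda) \cong {\lim}^1_{\alpha<\lambda} I^\alpha(S)$ as $\omega$-indexed inverse sequences. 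Condition (2) thereby translates to the vanishing of $\lim_n {\lim}^1_{\alpha<\lambda} I^\alpha(S)_n$ for every limit ordinal $\lambda$.

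For (1) $\Rightarrow$ (2), I would interchange the $\omega$- and $\lambda$-direction limits on the bi-indexed system $(\alpha,n) \mapsto I^\alpha(S)_n$ via Grothendieck-style composed-functor spectral sequences. Since $\lim^p_n$ vanishes for $p \geq 2$ (the classical fact for $\omega$-indexed sequences), the first spectral sequence $E_2^{p,q} = \lim^p_n \lim^q_\alpha$ collapses to a short exact sequence
\[
0 \to {\lim}^1_n I^\lambda(S) \to {\lim}^1_{(n,\alpha)} I^\alpha(S) \to \lim_n {\lim}^1_{\alpha<\lambda} I^\alpha(S) \to 0.
\]
The other spectral sequence $E_2^{p,q} = \lim^p_\alpha \lim^q_n$, together with the hypothesis ${\lim}^1 S = 0$ propagated along successor steps of the filtration (via the long exact sequence of $0\to I^{\alpha+1}(S)\to I^\alpha(S)\to Q^\alpha\to 0$, where the quotient $Q^\alpha = I^\alpha(S)/I^{\alpha+1}(S)$ has zero transition maps and hence vanishing $\lim$ and ${\lim}^1$), forces the middle term ${\lim}^1_{(n,\alpha)}$ to vanish; hence so does its quotient on the right.

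For (2) $\Rightarrow$ (3), the uncountable-cofinality case is tautological. In the countable-cofinality case, choose a cofinal $\omega$-chain $\alpha_0 < \alpha_1 < \dots$ in $\lambda$; one checks that the cokernel $C = \mathrm{Coker}(S \to \widehat{S}^\lambda)$, computed as $C_n = {\lim}^1_k I^{\alpha_k}(S)_n$ via Milnor, has transitions $C_{n+1} \to C_n$ whose image is ${\lim}^1_k I^{\alpha_k + 1}(S)_n = C_n$ (the shifted chain being cofinal), so these transitions are epimorphisms; the standard Mittag-Leffler argument in $\mathcal{A}$ then gives $\lim C = 0 \Rightarrow C = 0$, i.e., $\lambda$-completeness of $S$. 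For (3) $\Rightarrow$ (1), I would induct on the stabilization ordinal $\rho$ of the filtration $\{I^\alpha(S)\}$ (which exists by well-poweredness of $\mathcal{A}$), propagating ${\lim}^1$-vanishing from $I^\rho(S)$ (where Roos's theorem applies to its epimorphic transitions) back to $S$. At each successor step, the $Q^\alpha$-sequence transfers vanishing from $I^{\alpha+1}(S)$ to $I^\alpha(S)$; at each limit ordinal $\lambda \leq \rho$, applying the preliminary four-term sequence and invoking condition (3) (the cofinality dichotomy ensuring the rightmost term vanishes, directly in the countable case and via the spectral sequence of the first step in the uncountable case) gives ${\lim}^1 I^\lambda(S) = 0$.

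The main obstacle lies in the bi-indexed spectral-sequence bookkeeping: bounding ${\lim}^1_{(n,\alpha)} I^\alpha(S)$ in terms of ${\lim}^1 S$ for (1) $\Rightarrow$ (2), and handling the limit-ordinal step of the induction in (3) $\Rightarrow$ (1), both require a delicate invocation of the generator of $\mathcal{A}$ to access injective-like resolutions of the system $\{I^\alpha(S)\}_\alpha$ that are compatible with the two limit operations. Matching the cofinality dichotomy in (3) against the distinct behaviours of the four-term sequence at limit ordinals of each cofinality type is where the subtlety of the theorem statement becomes essential.
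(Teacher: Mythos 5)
Your proposal has two genuine gaps. The most serious is the step $(3)\Rightarrow(1)$: you propose to ``propagate $\lim^1$-vanishing from $I^\rho(S)$ \emph{back} to $S$,'' passing from $I^{\alpha+1}(S)$ to $I^\alpha(S)$ at successor steps. Downward transfinite induction from the stabilization ordinal $\rho$ to $0$ is not a valid induction scheme: from $P(\rho)$ together with $P(\alpha+1)\Rightarrow P(\alpha)$ and a clause at limit ordinals you can never reach, say, the finite stages when $\rho=\omega$, because no finite ordinal is obtained from $\omega$ by well-founded descent. The paper's proof of $(2)\Rightarrow(1)$ avoids this by running an \emph{upward} transfinite induction on the quotients, showing each $S^\alpha$ is local (successor steps are null-extensions, limit steps use closure of local sequences under small limits together with $\lim C^\lambda=0$), and then invoking $\lim^1 S\cong\lim^1 S^{{\rm len}(S)}$ from Proposition \ref{prop:mu}. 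Your cycle could be repaired by replacing $(3)\Rightarrow(1)$ with the trivial $(3)\Rightarrow(2)$ plus this upward argument, but as written the inductive architecture does not close. The second gap is foundational: your pivot identification ${\rm Coker}(S\to\widehat S^\lambda)\cong\lim^1_{\alpha<\lambda}I^\alpha(S)$ and the two composed-functor spectral sequences require right-derived functors of $\lim_{\alpha<\lambda}$ over arbitrary limit ordinals (including uncountable cofinality) and Grothendieck spectral sequences for the composite. The paper's hypotheses on $\AA$ (a generator, ${\rm AB3}$, ${\rm AB4^*}$) do not provide enough injectives, so these derived functors and spectral sequences are not available in the stated generality; the paper deliberately uses only $\lim^1$ of $\omega$-indexed sequences, via the explicit two-term complex \eqref{eq:lim_es}. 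A smaller point: in $(1)\Rightarrow(2)$ you justify $\lim^1_n I^\alpha(S)=0$ only ``along successor steps''; at limit ordinals this needs the separate fact $\lim S^\alpha=0$ (itself a transfinite induction, Proposition \ref{prop:limI^alpha}), from which the embedding $\lim^1 I^\alpha(S)\mono\lim^1 S$ of \eqref{eq:ses_lim^1} follows for all $\alpha$ at once.

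On the positive side, your argument for $(2)\Rightarrow(3)$ in the countable-cofinality case is correct and genuinely different from the paper's. Writing $C^\lambda_n\cong\lim^1_k I^{\alpha_k}(S)_n$ and factoring the transition map through the surjection onto $I^{\alpha_k+1}(S)_n$ followed by the interleaving isomorphism $\lim^1_k I^{\alpha_k+1}(S)_n\cong\lim^1_k I^{\alpha_k}(S)_n$ shows that $C^\lambda$ is always an epimorphic inverse sequence, so $\lim C^\lambda=0$ forces $C^\lambda=0$ by Corollary \ref{cor:epimorphic}. This derives $(3)$ from $(2)$ alone, whereas the paper proves $(1)\,\&\,(2)\Rightarrow(3)$ by a diagonal cofinality computation; your route is arguably cleaner and everything it uses ($\omega$-indexed $\lim^1$, $\lim^2=0$, the interleaving lemma) is available under the paper's hypotheses.
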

This theorem implies that if $S$ is $\lambda$-complete for any limit ordinal $\lambda,$ then $\lim^1 S=0.$ On the other hand, if $\lim^1 S=0$ and  $\lambda$ is a limit ordinal of countable cofinality, then $S$ is $\lambda$-complete. 

When studying the class of inverse sequences with trivial derived limits, it becomes clear that a more convenient class to investigate is the class of \emph{local inverse sequences} i.e. inverse sequences $L$ such that  $\lim L = \lim^1 L=0$. It turns out that any inverse sequence $S$ such that $\lim^1S=0$ can be uniquely decomposed into a short exact sequence $E\mono S \twoheadrightarrow L$, where $E$ is epimorphic, and $L$ is local (Cor. \ref{cor:unique_decomp}). Thus, many questions regarding inverse sequences with trivial derived limit are reduced to questions about local inverse sequences. 

The advantage of the class of local inverse sequences is that it is closed with respect to small limits. Moreover, under some additional assumptions on the abelian category $\AA$ there is a functor of localization of inverse sequences. Namely, if we, in addition to the above assumptions, assume that $\AA$ has a $\kappa$-compact generator for some regular cardinal $\kappa,$ then for any inverse sequence $S$ there is a universal morphism to a local inverse sequence
\[S \longrightarrow \mathcal{L}(S).\]
In other words, the subcategory of local inverse sequences is reflective (Prop.
\ref{prop:localization}).

The choice of the term ``local'' is related to the fact that for any category $\mathcal{C}$ and its morphism $\theta:c\to c'$, an object $l$ is called $\theta$-local if the map $\theta^*:\mathcal{C}(c',l)\to \mathcal{C}(c,l)$ is a bijection (see \cite{bousfield1975localization, farjoun2006cellular, libman2000cardinality, akhtiamov2021right}).  We show that an inverse sequence is local if and only if it is a $\theta$-local object in the category of inverse sequences with respect to some particular choice of $\theta$ (Prop. \ref{prop:1-I-local}).

A simplest example of a local inverse sequence is a \emph{null inverse sequence} i.e. an inverse sequence $N$ such that all the morphisms $N_{i+1}\to N_i$ are zero.  If there is a short exact sequence of inverse sequences $N\mono S'\epi S$, where $N$ is null, then $S'$ is called a null-extension of $S$. The second main result of our work is the following theorem (Th. \ref{th:description_of_local}). 

\begin{theorem*} Let $\AA$ be an abelian category with small direct sums, exact small products, and a generator. Then the class of local inverse sequences in $\AA$ is the least class of inverse sequences containing the trivial inverse sequence and closed with respect to small limits and null-extensions. 
\end{theorem*}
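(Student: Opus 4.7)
Let $\mathcal{K}$ denote the least class described in the statement. The plan is to prove both inclusions.

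For $\mathcal{K}\subseteq\{\text{local}\}$, the trivial inverse sequence is clearly local. A product of local sequences is local, since $\lim$ commutes with products in general and $\lim^1$ commutes with products under $\mathrm{AB4^*}$; a general small limit is the kernel of a morphism between two products, and chasing the long exact sequence of $\lim$ and $\lim^1$ across the resulting $\Ker/\Im/\Coker$ factorization forces both invariants to vanish, so small limits of local sequences are local. Null-extensions preserve locality: for $N\mono S'\epi L$ with $N$ null we have $\lim N=\lim^1 N=0$, and the long exact sequence sandwiches $\lim S'$ and $\lim^1 S'$ between zeros. This establishes $\mathcal{K}\subseteq\{\text{local}\}$.

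For the reverse inclusion, fix a local $L$, and work with the canonical transfinite image filtration $I^\alpha(L)$. A preliminary transfinite induction shows that $I^\alpha(L)$ is local for every $\alpha$: the short exact sequence $I^{\alpha+1}(L)\mono I^\alpha(L)\epi I^\alpha(L)/I^{\alpha+1}(L)$ has null cokernel, so the long exact sequence propagates locality at successor stages (it gives $\lim^1 I^{\alpha+1}(L)\cong\lim^1 I^\alpha(L)$ and analogously for $\lim$), while at limit stages we invoke closure of local under small limits. Consequently $L/I^\alpha(L)$ is also local, via the long exact sequence of $I^\alpha(L)\mono L\epi L/I^\alpha(L)$. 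Moreover, the filtration must reach zero at some ordinal: at a stable ordinal $I^\alpha(L)=I(I^\alpha(L))$ is epimorphic and local, and for an epimorphic inverse sequence $\lim\to L_0$ is surjective; together with $\lim I^\alpha(L)=0$ this forces $I^\alpha(L)=0$.

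The main induction now shows $L/I^\alpha(L)\in\mathcal{K}$ for every $\alpha$. The base $\alpha=0$ is trivial. For successors, the short exact sequence $I^\alpha(L)/I^{\alpha+1}(L)\mono L/I^{\alpha+1}(L)\epi L/I^\alpha(L)$ exhibits $L/I^{\alpha+1}(L)$ as a null-extension of $L/I^\alpha(L)\in\mathcal{K}$. For a limit $\lambda$ of countable cofinality, the first main theorem yields $\lambda$-completeness of $L$, whence $L/I^\lambda(L)\cong\widehat{L}^\lambda=\lim_{\alpha<\lambda}L/I^\alpha(L)$, a small limit of members of $\mathcal{K}$.

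The hard case is a limit $\lambda$ of uncountable cofinality, where the first main theorem supplies only $\lim\Coker(L\to\widehat{L}^\lambda)=0$ instead of epimorphism of $L\to\widehat{L}^\lambda$. Set $C=\Coker(L\to\widehat{L}^\lambda)$. One realizes $L/I^\lambda(L)$ as $\Ker(\widehat{L}^\lambda\epi C)$; the long exact sequence, using $\lim C=0$ and the locality of $\widehat{L}^\lambda\in\mathcal{K}$, shows $C$ is itself local. Placing $C$ in $\mathcal{K}$ is the main hurdle—I expect to handle it by transfinitely iterating the construction on $C$ until the residual cokernels vanish, or by strengthening the inductive invariant so that the cokernels of interest are absorbed. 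Once $C\in\mathcal{K}$, the object $L/I^\lambda(L)$ is the kernel of a map between two elements of $\mathcal{K}$, hence in $\mathcal{K}$; choosing any $\alpha$ past the stabilization ordinal finally gives $L=L/I^\alpha(L)\in\mathcal{K}$.
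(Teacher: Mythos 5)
Your first inclusion, the preliminary facts about the image filtration, the successor step, and the countable-cofinality limit step all match the paper's argument (the paper does not split on cofinality, but your use of $\lambda$-completeness for countable cofinality is a valid shortcut). The genuine gap is exactly where you flag it: limit ordinals $\lambda$ of uncountable cofinality. Observing that $C=\Coker(S^\lambda\mono \widehat S^\lambda)$ is local does not help, because placing a local sequence into $\mathcal{K}$ is precisely what the whole theorem is trying to establish; recursing the construction on $C$ gives no decreasing measure and hence no termination guarantee, and ``strengthening the inductive invariant'' is a hope, not an argument. As written, the proof is circular at this point.

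The paper's resolution never places $C$ in the class at all. For each ordinal $\beta$ it defines $J^\beta$ as the pullback of $I^\beta(C)\mono C$ along $\widehat S^\lambda\epi C$, producing a decreasing tower $S^\lambda\mono J^\beta\mono \widehat S^\lambda$ with $J^0=\widehat S^\lambda$. The snake lemma identifies $\Coker(J^{\beta+1}\mono J^\beta)$ with $(I^\beta(C))^1$, which is null and hence lies in $\mathcal{K}$; thus $J^{\beta+1}$ is the kernel of a morphism between two members of $\mathcal{K}$ and lies in $\mathcal{K}$ by closure under small limits. At limit stages $J^\mu=\lim_{\beta<\mu}J^\beta$. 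Finally, $\lim C=0$ (from Theorem \ref{th:gen_ML}) forces $I^{\mathrm{len}(C)}(C)=0$ by Corollary \ref{cor:limS=0}, so the tower terminates with $J^{\mathrm{len}(C)}=S^\lambda$. This interpolation by pullbacks of the image filtration of the cokernel, peeling off one null quotient at a time, is the missing idea; without it (or an equivalent device) your induction cannot pass limit ordinals of uncountable cofinality.
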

We draw an analogy between the category of groups and the category of inverse sequences. In this view, abelian groups are analogous to null inverse sequences, and central extensions are analogous to null-extensions. With this perspective, the theorem is analogous to Bousfield's description of the class of  $H\ZZ$-local groups as the least class containing the trivial group and closed with respect to small limits and central extensions \cite[Th. 3.10]{bousfield1977homological}. 

In the end of the paper, in order to illustrate the complexity of the class of local inverse sequences and emphasize the reasonableness of the statements of these theorems, we provide two types of examples of inverse sequences of abelian groups. Firstly, for each ordinal $\alpha$, we construct a local inverse sequence $S$ such that $I^\beta(S)\neq 0$ for any $\beta<\alpha$, but $I^\alpha(S)=0$  (Th. \ref{th:example}). Secondly, for each regular uncountable cardinal $\kappa$, we construct a local inverse sequence $S$ which is not $\kappa$-complete (Th. \ref{th:example2}). In particular for $\kappa=\aleph_1$ we obtain an example of a local inverse sequence which is $\lambda$-complete for all limit ordinals $\lambda$ except $\lambda=\aleph_1.$ 

In the course of our work, we raised the question: could it be the case that the condition $\lim^1 S=0$ is equivalent to the fact that $S$ is $\omega$-complete, and we do not need all the higher ordinals to formulate a necessarily and sufficient condition? We believe that this cannot be true, even in the category of abelian groups, but we have not been able to provide a counterexample. Therefore, we leave this question open for further investigation.

\begin{question*}
Is there an  $\omega$-complete inverse sequence of abelian groups with nonzero derived limit?
\end{question*}

We expect that, under some additional assumptions on the abelian category $\AA$, the analogy with Bousfield's theory of $H\ZZ$-localization of groups can be continued further. We think that there is a transfinite construction of the localization functor similar to the construction given in \cite{bousfield1977homological}, using the relative universal null-extensions similar to relative universal central extensions described in \cite{ivanov2018lengths} and \cite{farjoun2017relative}. However, we decided to leave this direction for further research.

\section*{Acknowledgements}
We are very grateful to Ekaterina Borodinova and Ioannis Emmanouil for useful discussions.

\section{Transfinite image filtration}

\subsection{Inverse sequences}

Further we will always assume that $\AA$ is an abelian category with a generator $G$ (i.e. the functor $\AA(G,-)$ is faithfull),  small direct sums (${\rm AB3}$) and exact small products (${\rm AB4}^*$). These assumptions imply that all small limits exists and left exact. An inverse sequence $S$ in $\AA$ is a couple consisting of two families $S=((S_i), (f_i))$ indexed by natural numbers $i\in \omega,$ where $S_i$ is an object in $\AA$ and  $f_i:S_{i+1}\to S_i$ is a morphism. One can say that inverse sequences are functors $S:\omega^{\scaleobj{0.7}{\rm op}}\to \AA.$ In particular, inverse sequences form an abelian category $\AA^{\omega^{\scaleobj{0.7}{\rm op}}}$ with small direct sums, exact small products (and a generator, Lemma \ref{lemma:a_generator}). Under these assumptions on $\AA$ for an inverse sequence $S$ in $\AA$ we have an exact sequence 
\begin{equation}\label{eq:lim_es}
0 \longrightarrow 
\lim S 
\longrightarrow 
\prod_{i} S_i 
\xrightarrow{\ 1-F\ } 
\prod_{i} S_i
\longrightarrow
{\lim}^1 S \longrightarrow 0,
\end{equation}
where ${\rm pr}_j F = f_j {\rm pr}_{j+1}$ (here ${\rm pr}_j: \prod_i S_i\to S_j$ denotes the canonical projection) and $\lim^n S=0$ for $n\geq 2$ \cite[Remark A.3.6.]{neeman2001triangulated}. We say that an inverse sequence is epimorphic, if the maps $f_i$ are epimorphisms. Roos proved that for an epimorphic inverse sequence $S$ we have ${\lim}^1 S=0$  \cite[Th. 3.1.]{roos2006derived}. We say that an inverse $S$ sequence is null, if  $f_i=0$ for each $i.$ It is easy to see that for a null inverse sequence $S$ we have $\lim S =\lim^1 S=0.$

Further we will take not only limits of inverse sequences, but also limits of some functors to the category of inverse sequences $F:J\to \AA^{\omega^{\scaleobj{0.7}{\rm op}}}.$ In this case we always use a subscript 
\begin{equation}
\lim_J : (\AA^{\omega^{\scaleobj{0.7}{\rm op}}})^J \longrightarrow \AA^{\omega^{\scaleobj{0.7}{\rm op}}}. 
\end{equation}
``$\lim$'' without any subscript always means a limit of an inverse sequence
\begin{equation}
\lim : \AA^{\omega^{\scaleobj{0.7}{\rm op}}  }\longrightarrow \AA.
\end{equation}

\subsection{Transfinite image filtration and completion} 
For an inverse sequence $S$ we denote by $S^{\rm sh}$ the shifted inverse sequence such that $S^{\rm sh}_i=S_{i+1}$ and $f^{S^{\rm sh}}_i=f^S_{i+1}.$ Then there is a morphism of inverse sequences
\begin{equation}
\tilde f:S^{\rm sh}\to S    
\end{equation}
defined by $\tilde f_i=f^S_i:S_{i+1}\to S_i.$ It is easy to check that the kernel and cokernel of $\tilde f$ are null inverse sequences. Therefore, $\tilde f$ induces  isomorphisms 
\begin{equation}\label{eq:shift}
\lim S^{\rm sh} \cong \lim S, \hspace{1cm}  {\lim}^1 S^{\rm sh} \cong {\lim}^1 S.  
\end{equation}
The inverse sequences defined by the image and the cokernel of $\tilde f$ are denoted by $I(S)$ and $S^1$ respectively. Then we have a short exact sequence
\begin{equation}
I(S) \mono S \epi S^1.
\end{equation}
It is easy to check that the morphism $S\epi S^1$ is a universal morphism from $S$ to a null inverse sequence. Note that $S^1=0$ if and only if $S$ is an epimorphic inverse sequence. 

Further for any ordinal number $\alpha$ we define $I^\alpha(S)$ such that $I^0(S)=S,$
\begin{equation}
I^{\alpha+1}(S)=I(I^\alpha(S)) \hspace{5mm} \text{and} \hspace{5mm}  I^\lambda(S)= \underset{\alpha<\lambda}\lim\: I^\alpha(S) 
\end{equation}
for a limit ordinal $\lambda.$ Since the functor of limit is left exact, the limit of monomorphisms is a monomorphism. So we get a transfinite tower of monomorphisms $I^\alpha(S)\mono S,$ which is called the transfinite image filtration of $S.$ For any ordinal $\alpha$ we denote by $S^\alpha$ the cokernel of the monomorphism $I^\alpha(S)\mono S.$ So there is a short exact sequence
\begin{equation}
I^\alpha(S) \mono S \epi S^\alpha.
\end{equation}
In proofs, when $S$ is fixed, we will simplify notation  $I^\alpha=I^\alpha(S).$  Using the snake lemma, it is easy to check that there is a short exact sequence
\begin{equation}\label{eq:ses_N}
(I^\alpha(S))^1 \mono S^{\alpha+1} \epi S^\alpha.    
\end{equation}
For any limit ordinal $\lambda$ the \emph{$\lambda$-completion} of an inverse sequence $S$ is defined as
\begin{equation}
\widehat S^\lambda = \lim_{\beta<\lambda} S^\beta.
\end{equation}
We also set $\widehat{S}^{\alpha+1}=S^\alpha$ for any ordinal $\alpha.$
The canonical projections $S\epi S^\beta$ define a natural map $S\to \widehat S^\lambda.$ 
$S$ is called \emph{$\lambda$-complete}, if the morphism $S\to \widehat S^\lambda$ is an epimorphism. Take a limit ordinal $\lambda.$ Since the functor $\lim_{\alpha<\lambda}$ is left exact, applying it to the short exact sequence $I^\alpha(S)\mono S \epi S^\alpha,$ we obtain  
$I^\lambda(S) = \Ker(S\to \widehat S^\lambda).$ It follows that for any limit ordinal $\lambda$  the morphism $S\to \widehat S^\lambda$ induces a monomorphism 
\begin{equation}\label{eq:S^lambda-mono}
S^\lambda \mono \widehat S^\lambda.
\end{equation}

\begin{proposition}\label{prop:limI^alpha}
For any ordinal $\alpha$ there are isomorphisms 
\begin{equation}\label{eq:lim_I^alpha}
\lim S^\alpha =\lim \widehat S^\alpha=0, \hspace{5mm} \lim I^\alpha(S) \cong \lim S,
\end{equation}
where the last isomorphism is induced by the monomorphism $I^\alpha(S)\mono S.$ Moreover, there is a short exact sequence 
\begin{equation}\label{eq:ses_lim^1}
{\lim}^1 I^\alpha(S) \mono {\lim}^1 S \epi {\lim}^1 S^\alpha.
\end{equation}
\end{proposition}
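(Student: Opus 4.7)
The plan is to treat $\lim S^\alpha=0$ and $\lim \widehat{S}^\alpha=0$ as the main inductive statements, since once those are in hand everything else drops out of the six-term exact sequence for $\lim/\lim^1$ applied to $I^\alpha(S)\mono S \epi S^\alpha$. Indeed, because $\lim^n=0$ for $n\geq 2$ on inverse sequences, the long exact sequence collapses to
\[
0\to \lim I^\alpha(S)\to \lim S \to \lim S^\alpha \to {\lim}^1 I^\alpha(S)\to {\lim}^1 S\to {\lim}^1 S^\alpha\to 0,
\]
so $\lim S^\alpha=0$ immediately yields both the isomorphism $\lim I^\alpha(S)\cong \lim S$ and the claimed short exact sequence \eqref{eq:ses_lim^1}.

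The remaining task is the transfinite induction on $\alpha$ to establish the vanishing of $\lim S^\alpha$ and $\lim \widehat{S}^\alpha$. The base $\alpha=0$ is trivial since $S^0=0$. For the successor step I would use the already established short exact sequence \eqref{eq:ses_N}, namely $(I^\alpha(S))^1\mono S^{\alpha+1}\epi S^\alpha$. Since $(I^\alpha(S))^1$ is a null inverse sequence, both its $\lim$ and $\lim^1$ vanish (as noted in the preceding subsection), so the long exact sequence associated to \eqref{eq:ses_N} gives $\lim S^{\alpha+1}\cong \lim S^\alpha$, which is $0$ by induction. The identity $\widehat{S}^{\alpha+1}=S^\alpha$ from the definitions then gives $\lim \widehat{S}^{\alpha+1}=0$ as well.

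For a limit ordinal $\lambda$, the key point is to go through $\widehat{S}^\lambda$ first. Since $\widehat{S}^\lambda=\lim_{\beta<\lambda} S^\beta$ by definition, and $\lim$ commutes with limits, one gets
\[
\lim \widehat{S}^\lambda \;=\; \lim \Bigl( \lim_{\beta<\lambda} S^\beta \Bigr) \;\cong\; \lim_{\beta<\lambda} \lim S^\beta \;=\; 0
\]
by the inductive hypothesis. Then from the monomorphism $S^\lambda\mono \widehat{S}^\lambda$ recorded in \eqref{eq:S^lambda-mono} and the left exactness of $\lim$, we obtain $\lim S^\lambda \mono \lim \widehat{S}^\lambda=0$, so $\lim S^\lambda=0$.

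I don't expect a serious obstacle here; the slightly delicate point is the limit step, where one must not try to compute $\lim S^\lambda$ directly (there is no evident exact sequence for it) but instead route through $\widehat{S}^\lambda$ via \eqref{eq:S^lambda-mono}. Once the vanishing results are packaged together by transfinite induction, the final assertions \eqref{eq:lim_I^alpha} and \eqref{eq:ses_lim^1} are immediate consequences of applying the six-term sequence to $I^\alpha(S)\mono S\epi S^\alpha$.
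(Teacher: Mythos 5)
Your proof is correct and follows essentially the same route as the paper: transfinite induction establishing $\lim S^\alpha=\lim\widehat{S}^\alpha=0$ (successor step via the null extension \eqref{eq:ses_N}, limit step by routing through $\widehat{S}^\lambda$ and the monomorphism \eqref{eq:S^lambda-mono}), with the remaining claims extracted from the exact sequence associated to $I^\alpha(S)\mono S\epi S^\alpha$. If anything, you are slightly more explicit than the paper in deriving \eqref{eq:ses_lim^1} from the six-term sequence.
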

\begin{proof} 
The isomorphism $\lim I^\alpha(S) \cong \lim S$ follows from the equation $\lim S^\alpha=0,$ the short exact sequence $I^\alpha(S)\mono S \epi S^\alpha$ and the fact that the functor of limit is left exact. So it is sufficient to prove the equations $\lim S^\alpha = \lim \widehat S^\alpha=0.$
The proof is by transfinite induction. For $\alpha=0$ the statement is obvious. 

Assume that $\lim S^\alpha=\lim \widehat S^\alpha=0$ and prove $\lim S^{\alpha+1}=\lim \widehat S^{\alpha+1}=0.$ We have $\lim \widehat S^{\alpha+1}=0$ because $\widehat S^{\alpha+1}=S^\alpha.$ Using short exact sequence $(I^\alpha)^1\mono S^{\alpha+1} \epi S^{\alpha}$ \eqref{eq:ses_N}, the fact that $(I^\alpha)^1$ is null, and the left exactness of the functor of limit we obtain that $\lim S^{\alpha+1}=0$. 

Now assume that $\lambda$ is a limit ordinal and for any $\alpha<\lambda$ we have $\lim S^\alpha=\lim \widehat S^{\alpha}=0.$ Prove that $\lim S^\lambda=\lim \widehat S^{\lambda}=0.$ Since limits commute with limits, we obtain $\lim \widehat S^\lambda \cong \lim_{\alpha<\lambda} \lim S^\alpha = 0.$ The equation $\lim S^\lambda=0$ follows from the embedding $S^\lambda \mono \widehat S^\lambda$ \eqref{eq:S^lambda-mono} and left exactness of the limit. 
\end{proof}

\subsection{Length of the transfinite image filtration}

The \emph{length of the transfinite image filtration} of $S$ is the   the least ordinal ${\rm len}(S)$ such that for any $\alpha>{\rm len}(S)$ the monomorphism $I^\alpha (S)\mono I^{{\rm len}(S)}(S)$ is an isomorphism. Further in Proposition \ref{prop:mu} we will show that it is well defined for any $S$.  

\begin{proposition}\label{prop:mu}
For an inverse sequence $S$ the ordinal ${\rm len}(S)$ is well defined. Moreover, $I^{{\rm len}(S)}(S)$ is an epimorphic inverse sequence, the canonical morphisms $I^{{\rm len}(S)}(S) \mono S\epi S^{{\rm len}(S)}$ induce  isomorphisms 
\begin{equation}
{\lim}^1 S \cong {\lim}^1 S^{{\rm len}(S)}, \hspace{1cm} \lim I^{{\rm len}(S)}(S)\cong \lim S   ,
\end{equation}  
and there is an isomorphism 
\begin{equation}
I^{{\rm len}(S)}(S)\cong \Im( \lim S\to S),
\end{equation}
where $\lim S$ is treated as a constant inverse sequence. 
\end{proposition}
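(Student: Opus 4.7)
My plan is to carry out three steps: exhibit ${\rm len}(S)$ via a well-poweredness argument, read off the epimorphicity of $I^{{\rm len}(S)}(S)$ and the two isomorphisms for $\lim$ and ${\lim}^1$ from Proposition \ref{prop:limI^alpha} together with Roos's theorem, and finally identify $I^{{\rm len}(S)}(S)$ with $\Im(\lim S\to S)$ by means of an auxiliary lemma on epimorphic inverse sequences.

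\emph{Well-definedness and the two isomorphisms.} The family $(I^\alpha(S))_\alpha$ is a transfinite descending chain of subobjects of $S$ in $\AA^{\omega^{\scaleobj{0.7}{\rm op}}}$. Since $\AA$ has a generator and satisfies $\mathrm{AB3}$ it is well-powered, and then so is $\AA^{\omega^{\scaleobj{0.7}{\rm op}}}$ because a subobject of $S$ is a compatible family of subobjects of the $S_i$. The chain must therefore stabilize at some ordinal $\mu$ with $I^\mu(S)=I^{\mu+1}(S)$, and a transfinite induction using $I^{\alpha+1}=I(I^\alpha)$ and $I^\lambda=\lim_{\alpha<\lambda}I^\alpha$ propagates the equality to every $\alpha\geq\mu$; I take ${\rm len}(S)$ to be the least such $\mu$. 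Writing $\mu={\rm len}(S)$, the identity $I(I^\mu(S))=I^\mu(S)$ says precisely that the transition maps of $I^\mu(S)$ are epimorphisms, so $I^\mu(S)$ is an epimorphic inverse sequence. Proposition \ref{prop:limI^alpha} then gives directly $\lim I^\mu(S)\cong\lim S$, and it also supplies the short exact sequence ${\lim}^1 I^\mu(S)\mono {\lim}^1 S\epi {\lim}^1 S^\mu$; Roos's theorem kills the left-hand term, whence ${\lim}^1 S\cong {\lim}^1 S^\mu$.

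\emph{The image identification.} The isomorphism $\lim S\cong\lim I^\mu(S)$ factors the canonical morphism of constant inverse sequences $\lim S\to S$ through $I^\mu(S)\mono S$, so $\Im(\lim S\to S)$ coincides, as a subobject of $S$, with $\Im(\lim I^\mu(S)\to I^\mu(S))$. It therefore suffices to prove an auxiliary lemma: for any epimorphic inverse sequence $E$, each canonical projection $\lim E\to E_i$ is an epimorphism. Iterating the shift isomorphism $\lim E^{\rm sh}\cong\lim E$ reduces to $i=0$. Then I form the morphism of inverse sequences $\phi:E\to \mathrm{const}(E_0)$ with $\phi_k=f_0\cdots f_{k-1}$, which is componentwise epi because each $f_k$ is epi. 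The kernel $K=\Ker\phi$ satisfies $K_0=0$ and $K_{k+1}=f_k^{-1}(K_k)$, which is the pullback of $K_k\mono E_k$ along the epimorphism $f_k$; hence the transition $K_{k+1}\to K_k$ is again epi and $K$ is epimorphic. Roos gives ${\lim}^1 K=0$, and the long exact sequence attached to $K\mono E\epi \mathrm{const}(E_0)$ now forces $\lim E\to E_0$ to be an epimorphism.

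\emph{Expected obstacle.} The only nontrivial step is this auxiliary lemma, which upgrades Roos's vanishing of ${\lim}^1$ on an epimorphic sequence to epimorphicity of each projection from the limit. All remaining steps are formal manipulation with Proposition \ref{prop:limI^alpha} and the definition of ${\rm len}(S)$.
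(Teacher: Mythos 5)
Your proof is correct and follows essentially the same route as the paper: well-poweredness (from the generator) gives stabilization of the chain of subobjects $I^\alpha(S)\mono S$, the identity $I(I^\mu)=I^\mu$ gives epimorphicity, and the two isomorphisms come from Proposition \ref{prop:limI^alpha} together with Roos's vanishing of ${\lim}^1$ on epimorphic sequences. The only difference is that the paper simply asserts the step ``since $I^\mu$ is epimorphic, $\lim I^\mu\to I^\mu$ is an epimorphism,'' whereas you justify it with the auxiliary lemma (kernel of $E\epi\mathrm{const}(E_0)$ is again epimorphic, then apply Roos), which is a legitimate and welcome filling-in of a detail the paper leaves implicit.
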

\begin{proof} 
Since $\AA$ has a generator, it is well-powered \cite[Prop. 3.35]{freyd1964abelian}. It follows that the transfinite decreasing sequence of subobjects $I^\alpha \mono S$ stabilises, and there is an ordinal $\mu$ such that for any $\alpha>\mu$ the monomorphism  $I^\alpha\mono I^\mu$ is an isomorphism. 
Therefore, we can take the least ordinal with this property and denote it by ${\rm len}(S).$ 
This property implies that $I(I^{\mu})\to I^{\mu}$ is an isomorphism. Hence $I^{\mu}$ is an epimorphic inverse sequence. 
The result of Roos \cite[Th. 3.1]{roos2006derived} implies that ${\lim}^1 I^{\mu}=0.$ Then the first isomorphism follows from the short exact sequence \eqref{eq:ses_lim^1}, and the second one follows from \eqref{eq:lim_I^alpha}. Since $I^\mu$ is epimorphic, we obtain that the morphism $\lim I^\mu \to I^\mu$ is an epimorphism. Using the commutativity of the diagram
\begin{equation}
\begin{tikzcd}
\lim I^\mu \ar[twoheadrightarrow]{r} \ar{d}{\cong} & I^\mu \ar[rightarrowtail]{d} \\
\lim S \ar{r} & S
\end{tikzcd}
\end{equation}
we obtain that $I^\mu\cong \Im( \lim S\to S ).$
\end{proof}

\begin{corollary}\label{cor:limS=0}
For an inverse sequence $S$ the following statements are equivalent 
\begin{enumerate}
    \item $\lim S=0;$
    \item $I^{{\rm len}(S)}(S)=0.$
\end{enumerate}
\end{corollary}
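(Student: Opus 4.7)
The plan is to deduce both implications directly from the two isomorphisms established in Proposition \ref{prop:mu}, namely
\[
\lim I^{{\rm len}(S)}(S) \cong \lim S \qquad \text{and} \qquad I^{{\rm len}(S)}(S) \cong \Im(\lim S \to S),
\]
where $\lim S$ is viewed as a constant inverse sequence. No new transfinite induction is required.

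For the direction (1)$\Rightarrow$(2), I would argue as follows. If $\lim S = 0$, then the constant inverse sequence $\lim S$ is the zero object, and hence the canonical morphism $\lim S \to S$ is the zero morphism. Its image is therefore zero, so by the second isomorphism of Proposition \ref{prop:mu} we conclude $I^{{\rm len}(S)}(S) = 0$.

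For the direction (2)$\Rightarrow$(1), I would use the first isomorphism: if $I^{{\rm len}(S)}(S) = 0$, then $\lim I^{{\rm len}(S)}(S) = 0$, and the isomorphism $\lim I^{{\rm len}(S)}(S) \cong \lim S$ forces $\lim S = 0$.

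There is no serious obstacle here; the whole content has already been absorbed into Proposition \ref{prop:mu}, and the corollary is just a clean packaging. The only minor point to keep in mind is that the morphism $\lim S \to S$ in the second isomorphism is the canonical one induced by the projections $\lim S \to S_i$, so that ``$\lim S = 0$'' and ``$\Im(\lim S \to S) = 0$'' are indeed interchangeable.
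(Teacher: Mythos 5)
Your proof is correct and is exactly the argument the paper intends: the corollary is stated without proof precisely because both implications follow immediately from the two isomorphisms $\lim I^{{\rm len}(S)}(S)\cong\lim S$ and $I^{{\rm len}(S)}(S)\cong\Im(\lim S\to S)$ of Proposition \ref{prop:mu}, just as you argue.
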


\begin{corollary} \label{cor:epimorphic}
If $S$ is an epimorphic inverse sequence and $\lim S=0,$ then $S=0.$ 
\end{corollary}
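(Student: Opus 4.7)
The plan is to chain together Corollary \ref{cor:limS=0} with the observation that, for an epimorphic inverse sequence $S$, the entire transfinite image filtration is constant at $S$.

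First I would verify that $I(S) \mono S$ is an isomorphism when $S$ is epimorphic. Indeed, $I(S)$ is defined as the image of the shift map $\tilde f: S^{\rm sh} \to S$; the components of $\tilde f$ are precisely the structure maps $f_i$, which are epimorphisms by hypothesis. So $I(S)_i = \Im(f_i) = S_i$, and $I(S) \cong S$ via the canonical monomorphism.

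Next I would run a transfinite induction to conclude that $I^\alpha(S) \cong S$ for every ordinal $\alpha$. The successor step follows from the previous paragraph applied to $I^\alpha(S)$, which is again epimorphic (being isomorphic to $S$). For a limit ordinal $\lambda$, the inductive hypothesis lets us identify the diagram $(I^\alpha(S))_{\alpha<\lambda}$ with the constant diagram on $S$; since the indexing poset is connected, $I^\lambda(S) = \lim_{\alpha<\lambda} I^\alpha(S) \cong S$.

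Finally I would invoke Corollary \ref{cor:limS=0}: the hypothesis $\lim S = 0$ gives $I^{{\rm len}(S)}(S) = 0$, and by the induction above $I^{{\rm len}(S)}(S) \cong S$, so $S = 0$. There is no real obstacle here; the only thing worth double-checking is the connectedness argument at limit ordinals, which ensures the limit of a constant diagram is the value itself.
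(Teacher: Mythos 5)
Your proof is correct and follows the route the paper intends: for an epimorphic $S$ the transfinite image filtration is constant ($I^\alpha(S)\cong S$ for all $\alpha$, the limit step being the limit of a constant diagram over the connected poset $\lambda$), so $I^{{\rm len}(S)}(S)\cong S$ and Corollary \ref{cor:limS=0} gives $S=0$. The paper leaves this as an immediate consequence of Corollary \ref{cor:limS=0} (equivalently, of the isomorphism $I^{{\rm len}(S)}(S)\cong \Im(\lim S\to S)$ in Proposition \ref{prop:mu}), and your argument just spells out the same observation.
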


\begin{corollary}\label{cor:unique_decomp}
For an inverse sequence $S$ there exists a unique (up to isomorphism) short exact sequence
\begin{equation}
E \mono S \epi S'
\end{equation}
such that $E$ is epimorphic and $\lim S'=0.$ Moreover, for such an exact sequence we have $\lim^1 S \cong \lim^1 S'.$
\end{corollary}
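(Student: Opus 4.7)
The plan is to take $E := I^{{\rm len}(S)}(S)$ and $S' := S^{{\rm len}(S)}$ for existence, and to pin down uniqueness by showing any admissible $E'$ must coincide with $\Im(\lim S\to S)$.

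For existence, Proposition~\ref{prop:mu} gives that $I^{{\rm len}(S)}(S)$ is an epimorphic inverse sequence, and Proposition~\ref{prop:limI^alpha} gives $\lim S^{{\rm len}(S)}=0$. Thus the canonical short exact sequence $I^{{\rm len}(S)}(S)\mono S\epi S^{{\rm len}(S)}$ is already the required decomposition, and no further construction is needed.

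For uniqueness, suppose $E'\mono S\epi S''$ is any short exact sequence with $E'$ epimorphic and $\lim S''=0$. Applying $\lim$ and ${\lim}^1$, and using that ${\lim}^i=0$ for $i\geq 2$, I obtain the six-term exact sequence
\begin{equation*}
0\to \lim E'\to \lim S\to \lim S''\to {\lim}^1 E'\to {\lim}^1 S\to {\lim}^1 S''\to 0.
\end{equation*}
Since $E'$ is epimorphic, Roos's theorem gives ${\lim}^1 E'=0$, and by hypothesis $\lim S''=0$. Therefore $\lim E'\xrightarrow{\cong}\lim S$ and ${\lim}^1 S\xrightarrow{\cong}{\lim}^1 S''$; the latter isomorphism, once we show $S''\cong S'$, yields the ``moreover'' claim.

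To identify $E'$, I would use that $E'$ being epimorphic forces the morphism $\lim E'\to E'$ from the constant inverse sequence to be an epimorphism — this is the same observation (based on Roos's theorem together with \eqref{eq:lim_es}) that appeared inside the proof of Proposition~\ref{prop:mu}. Composing with the monomorphism $E'\mono S$ and the isomorphism $\lim S\cong\lim E'$, the image of the natural map $\lim S\to S$ is exactly $E'$. But Proposition~\ref{prop:mu} identifies this image with $I^{{\rm len}(S)}(S)=E$, so $E'=E$ as subobjects of $S$, and hence $S''\cong S'$. The only potentially delicate point is the epimorphicity of $\lim E'\to E'$ for epimorphic $E'$ in this abstract setting, but since this was already handled inside Proposition~\ref{prop:mu}, I do not expect a real obstacle here.
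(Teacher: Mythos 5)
Your proof is correct, and while the existence half is identical to the paper's (both take $E=I^{{\rm len}(S)}(S)$, $S'=S^{{\rm len}(S)}$ and quote Propositions \ref{prop:mu} and \ref{prop:limI^alpha}), your uniqueness argument runs along a genuinely different track. The paper compares an arbitrary admissible $E'$ with $I^\alpha(S)$ for $\alpha\geq {\rm len}(S),{\rm len}(S'')$: it uses functoriality of the transfinite image filtration to see that $I^\alpha(S)\to S\to S''$ vanishes (since $I^\alpha(S'')=0$ by Corollary \ref{cor:limS=0}), obtains a monomorphism $I^\alpha(S)\mono E'$, and then kills its cokernel by observing it is epimorphic with zero limit and invoking Corollary \ref{cor:epimorphic}. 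You instead characterize $E'$ intrinsically as $\Im(\lim S\to S)$: the six-term sequence gives $\lim E'\cong\lim S$, the epimorphicity of $\lim E'\to E'$ (the same unproved-but-asserted step already present inside the proof of Proposition \ref{prop:mu}, which follows from Roos's theorem applied to the kernels $\Ker(E'_{n+i}\to E'_n)$) identifies $E'$ with that image, and the last clause of Proposition \ref{prop:mu} identifies the image with $I^{{\rm len}(S)}(S)$. Your route is slightly more economical — it bypasses the transfinite filtration of $E'$ and $S''$ and does not need Corollaries \ref{cor:limS=0} and \ref{cor:epimorphic} — and it also yields the ``moreover'' isomorphism $\lim^1 S\cong\lim^1 S''$ directly from the six-term sequence rather than deducing it after the identification. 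The paper's route, on the other hand, stays entirely within the formalism of the image filtration and avoids any appeal to the cone morphism $\lim S\to S$ beyond what Proposition \ref{prop:mu} already records. Both are valid; just make sure, when you cite Proposition \ref{prop:mu}, that you use the fact (visible in its proof) that the isomorphism $I^{{\rm len}(S)}(S)\cong\Im(\lim S\to S)$ is an equality of subobjects of $S$, since that is what lets you conclude $E'=E$ inside $S$ and hence $S''\cong S'$.
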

\begin{proof}
Propositions \ref{prop:mu} and \ref{prop:limI^alpha} imply that $I^{{\rm len}(S)}(S) \mono S \epi S^{{\rm len}(S)}$ satisfies this property. Assume that $E \mono S \epi S'$ is a such a short exact sequence and prove that it is isomorphic to $I^{{\rm len}(S)}(S) \mono S \epi S^{{\rm len}(S)}.$ 
Consider an ordinal number $\alpha$ such that $\alpha\geq {\rm len}(S), {\rm len}(S').$ Then $I^\alpha(E)=E, I^{\alpha}(S)=I^{{\rm len}(S)}(S)$ and $I^{\alpha}(S')=I^{{\rm len}(S')}(S').$ Corollary \ref{cor:limS=0} implies that $I^{\alpha}(S')=0.$ Therefore the composition $I^\alpha(S) \to S \to S'$ is trivial. Therefore, there is a morphism $I^\alpha(S) \mono E.$ The assumption $\lim S'=0$ implies that the morphism 
$\lim E \to \lim S$ is an isomorphism. By Proposition \ref{prop:mu} we have that $\lim I^\alpha(S) \to \lim S$ is an isomorphism. Therefore the map $\lim I^\alpha(S) \to \lim E$ is an isomorphism. Since $E$ and $I^\alpha(S)$ are epimorphic, we obtain that ${\rm Coker}(I^\alpha(S)\mono E)$ is epimorphic and $\lim {\rm Coker}(I^\alpha(S)\mono E)=0.$ Then Corollary \ref{cor:epimorphic} implies that ${\rm Coker}(I^\alpha(S)\mono E)=0$ and the morphism $I^\alpha(S)\mono E$ is an isomorphism. 
\end{proof}

\section{Local inverse sequences}
An inverse sequence $S$ is called \emph{local} if $\lim S={\lim}^1S=0.$ 
The exact sequence \eqref{eq:lim_es} implies that $S$ is local if and only if the morphism $1-F : \prod S_i \to \prod S_i$ is an isomorphism. 
It is easy to see that the category of local inverse sequences is a Serre subcategory of the category of all inverse sequences. In particular, the class of local inverse sequences is closed with respect to extensions. Null inverse sequences are local. An extension of two null inverse sequences is not necessarily null but it is still an example of a local inverse sequence.

Next, we explain the choice of the term ``local'' and prove some properties of local inverse sequences. For a general category $\CC$ and a morphism $\theta:c'\to c$ an object $l$ of $\CC$ is called \emph{$\theta$-local}, if the map $\theta^*:\CC(c,l)\to \CC(c',l)$ is a bijection. The class of $\theta$-local objects is closed with respect to small limits \cite[\S 1.5]{libman2000cardinality}. Further we show that local inverse sequences are $\theta$-local objects with respect to particular morphism $\theta$ in $\AA^{\omega^{\scaleobj{0.7}{\rm op}}}.$ As a corollary we obtain that the class of local inverse sequences is closed with respect to small limits and extensions.

For an object $A$ in $\AA$ and a natural number $n$ we denote by $A(n)$ the inverse sequence such that $A(n)_i=A$ for $i\leq n,$  $A(n)_i=0$ for $i>n,$ and $f_i=1_A$ for $i<n.$ We denote by $\iota(n):A(n)\to A(n+1)$ the morphism of inverse sequences such that $\iota(n)_i=1_A$ for $i\leq n$ and $\iota(n)_i=0$ for $i>n.$ 
\begin{equation}
\begin{tikzcd}
A(n)\ar[d,"\iota(n)"] &  A \ar[d,"1_A"] & \dots \ar[l,"1_A"'] & A \ar[l,"1_A"'] \ar[d,"1_A"] & 0 \ar[l] \ar[d] & 0 \ar[d] \ar[l] & \dots \ar[l] \\
A(n+1) & A & \dots \ar[l,"1_A"'] & A \ar[l,"1_A"'] & A \ar[l,"1_A"] & 0 \ar[l] & \dots \ar[l]
\end{tikzcd}   
\end{equation}

\begin{lemma}\label{lemma:A(n)}
For any object $A$ and any inverse sequence $S$ there is an natural (adjunction) isomorphism  
\begin{equation}\label{eq:iso_A(n)}
\AA^{\omega^{\scaleobj{0.7}{\rm op}}}(A(n),S) \cong \AA(A,S_n), \hspace{1cm} \varphi \mapsto \varphi_n.  
\end{equation}
Moreover, the diagram 
\begin{equation}
\begin{tikzcd}
\AA^{\omega^{\scaleobj{0.7}{\rm op}}}(A(n+1),S)
\ar[d,"\iota(n)^*"]
\ar[r,"\cong"] 
& \AA(A,S_{n+1}) 
\ar[d,"(f_n)_*"] 
\\
\AA^{\omega^{\scaleobj{0.7}{\rm op}}}(A(n),S) 
\ar[r,"\cong"] 
& \AA(A,S_{n})
\end{tikzcd}
\end{equation}
is commutative. 
\end{lemma}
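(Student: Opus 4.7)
The plan is to observe that a morphism $\varphi:A(n)\to S$ in $\AA^{\omega^{\scaleobj{0.7}{\rm op}}}$ is completely determined by its $n$-th component, and then verify the square by unwinding both paths. First I would unpack the naturality condition $f_i^S\circ \varphi_{i+1}=\varphi_i\circ f_i^{A(n)}$ at each index: for $i>n$ we have $A(n)_i=0$, hence $\varphi_i=0$; at $i=n$ both sides vanish (since $\varphi_{n+1}=0$ and $f_n^{A(n)}=0$), so there is no constraint relating $\varphi_n$ to $\varphi_{n+1}$; for $i<n$ the relation $f_i^{A(n)}=1_A$ forces $\varphi_i=f_i^S\circ \varphi_{i+1}$, and iterating gives $\varphi_i=f_i^Sf_{i+1}^S\cdots f_{n-1}^S\circ \varphi_n$.

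This immediately makes $\varphi\mapsto \varphi_n$ injective, and surjectivity follows by running the construction in reverse: given $g:A\to S_n$ I would set $\varphi_n=g$, $\varphi_i=f_i^S\cdots f_{n-1}^S\circ g$ for $i<n$, and $\varphi_i=0$ for $i>n$, and check that the required squares commute on the nose. Naturality of the bijection in $S$ is automatic from this formula, since post-composing $\varphi$ with a morphism $S\to T$ simply post-composes each $\varphi_i$, and in particular $\varphi_n$, with the corresponding component.

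For the commutative square, I would chase a morphism $\psi:A(n+1)\to S$ around both routes. The right–then–down path yields $f_n^S\circ \psi_{n+1}$. The down–then–right path first produces the composite $\psi\circ \iota(n):A(n)\to S$, whose $n$-th component is $\psi_n\circ \iota(n)_n=\psi_n\circ 1_A=\psi_n$. These two outputs coincide by the compatibility condition for $\psi$ at index $n$: $f_n^S\circ \psi_{n+1}=\psi_n\circ f_n^{A(n+1)}=\psi_n\circ 1_A=\psi_n$. There is no real obstacle in this argument; everything is a direct unwinding of definitions. The one point that warrants care is keeping the indexing of $A(n)$ straight and recognising that the vanishing of $f_n^{A(n)}$ is precisely what decouples $\varphi_n$ from the forced condition $\varphi_{n+1}=0$, which is what makes the adjunction isomorphism hold.
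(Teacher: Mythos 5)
Your proof is correct and is exactly the verification the paper omits (its proof of this lemma is just ``Straightforward''): the components $\varphi_i$ for $i<n$ are forced by naturality, the components for $i>n$ vanish, the square at index $n$ is vacuous since $A(n)_{n+1}=0$, and the commutativity of the displayed diagram is the naturality square of $\psi$ at index $n$ where $f_n^{A(n+1)}=1_A$. Nothing is missing.
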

\begin{proof}
Straightforward. 
\end{proof}

Further we fix a generator $G$ of $\AA$ and consider an inverse sequence defined by
\begin{equation}
\tilde G := \bigoplus_{i<\omega} G(i).
\end{equation}

\begin{lemma}\label{lemma:a_generator}
$\tilde G$ is a generator of $ \AA^{\omega^{\scaleobj{0.7}{\rm op}}}$ and there is an isomorphism 
\begin{equation}\label{eq:prod}
\AA^{\omega^{\scaleobj{0.7}{\rm op}}}(\tilde G, S) \cong {\prod}_i\: \AA(G,S_i).
\end{equation}
\end{lemma}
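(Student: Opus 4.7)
The plan is to first establish the $\mathrm{Hom}$-isomorphism \eqref{eq:prod} and then deduce the generator property from it together with the fact that $G$ is a generator of $\AA$. Both claims reduce to the universal property of the coproduct together with Lemma \ref{lemma:A(n)}.

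For the isomorphism, I would chain two standard natural identifications. First, since $\tilde G=\bigoplus_{i<\omega}G(i)$, the universal property of the coproduct in $\AA^{\omega^{\scaleobj{0.7}{\rm op}}}$ gives
\begin{equation*}
\AA^{\omega^{\scaleobj{0.7}{\rm op}}}\!\left(\bigoplus_{i<\omega} G(i),\, S\right) \;\cong\; \prod_{i<\omega} \AA^{\omega^{\scaleobj{0.7}{\rm op}}}(G(i), S),
\end{equation*}
natural in $S$. Small direct sums in $\AA^{\omega^{\scaleobj{0.7}{\rm op}}}$ exist and are computed componentwise, since AB3 holds in $\AA$. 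Second, Lemma \ref{lemma:A(n)} identifies each factor $\AA^{\omega^{\scaleobj{0.7}{\rm op}}}(G(i), S)$ naturally with $\AA(G, S_i)$ via $\varphi\mapsto\varphi_i$. Composing the two identifications yields \eqref{eq:prod}, and naturality in $S$ is inherited from each ingredient.

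For the generator property, I must show that the functor $\AA^{\omega^{\scaleobj{0.7}{\rm op}}}(\tilde G, -)$ is faithful. Let $\varphi\colon S \to S'$ be a morphism of inverse sequences with $\AA^{\omega^{\scaleobj{0.7}{\rm op}}}(\tilde G, \varphi) = 0$. By naturality of \eqref{eq:prod}, this transports to the vanishing of the map $\prod_i \AA(G, \varphi_i)\colon \prod_i\AA(G,S_i)\to\prod_i\AA(G,S'_i)$. A product of additive maps vanishes iff each factor does, so each $\AA(G,\varphi_i)$ is zero. Since $G$ is a generator of $\AA$, each $\varphi_i=0$, and hence $\varphi=0$.

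The whole argument is a formal combination of the universal property of coproducts in functor categories, the adjunction isomorphism supplied by Lemma \ref{lemma:A(n)}, and the definition of a generator. The only mildly non-routine point is verifying that the composed isomorphism \eqref{eq:prod} is natural in $S$, but this is automatic from the naturality of each of its two ingredients, so no substantial obstacle is expected.
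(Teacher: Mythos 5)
Your proposal is correct and follows the same route as the paper: the isomorphism \eqref{eq:prod} is obtained by combining the coproduct universal property with the adjunction isomorphism \eqref{eq:iso_A(n)} from Lemma \ref{lemma:A(n)}, and the generator property is then deduced from the faithfulness of $\AA(G,-)$. You merely spell out the faithfulness argument that the paper leaves implicit, which is a harmless elaboration.
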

\begin{proof}
The isomorphism \eqref{eq:prod} 
follows from the isomorphism $\AA^{\omega^{\scaleobj{0.7}{\rm op}}}({\bigoplus}_i\: G(i), S)\cong  \prod_i \AA^{\omega^{\scaleobj{0.7}{\rm op}}}(G(i), S)$ and the isomorphism \eqref{eq:iso_A(n)}. The fact that $\tilde G$ is a generator follows from the fact that $G$ is a generator, and the isomorphism \eqref{eq:prod}.
\end{proof}

Consider a morphism of inverse sequences 
\begin{equation}
1 - I : \tilde G \longrightarrow \tilde G, 
\end{equation}
where $I{\rm em}_n = {\rm em}_{n+1} \iota(n)$ and ${\rm em}_n: G(n)\to \tilde G$ is the canonical embedding. 
\begin{proposition}\label{prop:1-I-local}
An inverse sequence $S$ is local if and only if it is a $(1-I)$-local object of $\AA^{\omega^{\scaleobj{0.7}{\rm op}}}.$
\end{proposition}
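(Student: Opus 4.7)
The plan is to identify $(1-I)^*$ with $\AA(G, 1-F)$ and then to combine this with the exact sequence \eqref{eq:lim_es} together with a ``generator reflects isomorphisms'' argument.

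First I would apply Lemma \ref{lemma:a_generator} to identify $\AA^{\omega^{\scaleobj{0.7}{\rm op}}}(\tilde G, S)$ with $\prod_i \AA(G, S_i)$; since $\AA(G,-)$ preserves products, this also equals $\AA(G,\prod_i S_i)$. Next, using $I\,{\rm em}_n = {\rm em}_{n+1}\iota(n)$ together with the commutative diagram of Lemma \ref{lemma:A(n)} (in which $\iota(n)^*$ corresponds to $(f_n)_*$), I would verify that $(1-I)^*$ corresponds under this identification to the map $(\alpha_i)_i\mapsto (\alpha_i - f_i\alpha_{i+1})_i$, and that under the further identification with $\AA(G,\prod_i S_i)$ this is precisely $\AA(G, 1-F)$.

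At this point the exact sequence \eqref{eq:lim_es} reduces the claim to the statement that, for any morphism $\phi$ in $\AA$, $\phi$ is an isomorphism if and only if $\AA(G,\phi)$ is a bijection: indeed, $S$ being local is equivalent to $1-F$ being an isomorphism in $\AA$. The ``only if'' direction is automatic. For the ``if'' direction, injectivity of $\AA(G,\phi)$ combined with left exactness of $\AA(G,-)$ and the fact that $G$ detects zero objects gives $\AA(G,\Ker\phi)=0$, hence $\Ker\phi=0$ and $\phi$ is a monomorphism. For the epimorphism part, I would use that in an ${\rm AB3}$ category with generator $G$, every object $X'$ admits an epimorphism $p:G^{(I)}\twoheadrightarrow X'$ for some set $I$; surjectivity of $\AA(G,\phi)$ allows us to lift each coordinate $p_i:G\to X'$ to some $\tilde p_i:G\to X$ with $\phi\tilde p_i=p_i$, and then $p$ factors as $G^{(I)}\to X\xrightarrow{\phi} X'$, forcing $\phi$ to be an epimorphism.

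The main obstacle is this last step: one must verify that being a generator is strong enough to reflect not only monomorphisms but also epimorphisms of $\AA$. The factorization argument above settles this, after which the remaining steps are simply bookkeeping with the identifications supplied by Lemmas \ref{lemma:A(n)} and \ref{lemma:a_generator}.
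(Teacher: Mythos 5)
Your proposal is correct and follows essentially the same route as the paper: identify $(1-I)^*$ with $(1-F)_*$ via Lemmas \ref{lemma:A(n)} and \ref{lemma:a_generator}, reduce locality of $S$ to $1-F$ being an isomorphism via \eqref{eq:lim_es}, and conclude because $\AA(G,-)$ reflects isomorphisms. The only divergence is that where the paper simply cites the fact that a faithful functor reflects monomorphisms and epimorphisms, you reprove the epimorphism half by hand via a canonical epimorphism $G^{(I)}\epi X'$ and a lifting argument; this is valid but more machinery (AB3, coproducts of the generator) than the abstract one-line argument requires.
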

\begin{proof}
Lemma \ref{lemma:A(n)} implies that there is an isomorphism 
\begin{equation}
\AA^{\omega^{\scaleobj{0.7}{\rm op}}}\left(\tilde G, S\right)  \cong {\prod}_i\: \AA(G, S_i) \cong \AA\left(G,{\prod}_i S_i\right)
\end{equation}
and the diagram 
\begin{equation}
\begin{tikzcd}
\AA^{\omega^{\scaleobj{0.7}{\rm op}}} \left(\tilde G, S\right) 
\ar[r,"\cong"] \ar[d,"(1-I)^*"]
& 
\AA\left(G, \prod_i S_i \right) 
\ar[d,"(1-F)_*"]
\\
\AA^{\omega^{\scaleobj{0.7}{\rm op}}} \left(\tilde G, S\right) 
\ar[r,"\cong"]\ar[r,"\cong"] 
& 
\AA\left(G, \prod_i S_i \right)
\end{tikzcd}
\end{equation}
is commutative. Therefore, $S$ is $(1-I)$-local if and only if the map $(1-F)_*$ is an isomorphism. Since $G$ is a generator, the functor $\AA(G,-)$ is faithfull. A faithfull functor reflects monomorphisms and epimorphisms \cite[Th.7.1]{mitchell1965theory}. Therefore $\AA(G,-)$ reflects isomorphisms.  It follows that $(1-F)_*$ is an isomorphism if and only if $1-F$ is an isomorphism, which is equivalent to the fact that $S$ is local. 
\end{proof}

\begin{corollary}\label{cor:closed_under_limits}
The class of local inverse sequences is closed with respect to small limits.
\end{corollary}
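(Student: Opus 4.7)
The plan is to reduce this to the general fact already cited in the text that for any morphism $\theta$ in any category $\CC$, the full subcategory of $\theta$-local objects is closed under small limits. Indeed, Proposition \ref{prop:1-I-local} identifies the class of local inverse sequences with the class of $(1-I)$-local objects in the category $\AA^{\omega^{\scaleobj{0.7}{\rm op}}}$, so the corollary follows immediately from this general fact.

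For completeness, I would briefly recall why $\theta$-locality is preserved by small limits. Given a functor $D: J\to \AA^{\omega^{\scaleobj{0.7}{\rm op}}}$ whose values are all $(1-I)$-local, and writing $L=\lim_J D$, the representable functor $\AA^{\omega^{\scaleobj{0.7}{\rm op}}}(-,L)$ turns this limit into a limit of hom-sets:
\begin{equation}
\AA^{\omega^{\scaleobj{0.7}{\rm op}}}(\tilde G, L) \cong \lim_J \AA^{\omega^{\scaleobj{0.7}{\rm op}}}(\tilde G, D(j)),
\end{equation}
and similarly for $\AA^{\omega^{\scaleobj{0.7}{\rm op}}}(\tilde G, L)$ precomposed with $(1-I)$. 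Since a limit of bijections is a bijection, the map $(1-I)^*$ on $\AA^{\omega^{\scaleobj{0.7}{\rm op}}}(\tilde G, L)$ is a bijection, so $L$ is $(1-I)$-local, hence local by Proposition \ref{prop:1-I-local}.

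There is essentially no obstacle here; the corollary is a formal consequence of the characterization just established. The only thing to be slightly careful about is that the proof uses the identification of local inverse sequences with $\theta$-local objects for the specific morphism $\theta = 1-I$, and this identification is exactly what Proposition \ref{prop:1-I-local} provides. Everything else is a standard manipulation of representable functors commuting with limits.
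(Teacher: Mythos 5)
Your proof is correct and is exactly the argument the paper intends: the corollary is stated without proof precisely because it follows from Proposition \ref{prop:1-I-local} together with the cited general fact that $\theta$-local objects are closed under small limits, and your sketch of that general fact (the covariant hom-functor $\AA^{\omega^{\scaleobj{0.7}{\rm op}}}(\tilde G,-)$ sends limits to limits, and a limit of bijections is a bijection) is the standard one. No issues.
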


Let $\kappa$ be a regular cardinal. We say that a poset is $\kappa$-directed if any its subset of cardinality $<\kappa$ has an upper bound.  An object $c$ of a category $\CC$ is called $\kappa$-compact or ($\kappa$-presentable), if the hom-functor $\CC(c,-)$ commutes with colimits over $\kappa$-directed posets. Note that if $\kappa<\kappa',$ then a $\kappa$-compact object is also $\kappa'$-compact.

\begin{proposition}\label{prop:localization} Assume that $\AA$ has a $\kappa$-compact generator $G$ for some regular cardinal $\kappa.$
Then for any inverse sequence $S$ in $\AA$ there exists a universal initial morphism to a local inverse sequence 
\begin{equation}
S \longrightarrow \mathcal{L}(S).
\end{equation}
In other words, the full subcategory of local inverse sequences is a reflective subcategory of $\AA^{\omega^{\scaleobj{0.7}{\rm op}}}$. 
\end{proposition}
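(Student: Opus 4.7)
The plan is to construct $\mathcal{L}(S)$ by a small object argument, exploiting the characterization of local inverse sequences as $(1-I)$-local objects in $\AA^{\omega^{\scaleobj{0.7}{\rm op}}}$ given by Proposition \ref{prop:1-I-local}. Set $\kappa' := \max(\kappa,\aleph_1)$. First I would check that $\tilde G$ is $\kappa'$-compact in $\AA^{\omega^{\scaleobj{0.7}{\rm op}}}$. Because colimits in the functor category are computed pointwise and $G$ is $\kappa$-compact in $\AA$, Lemma \ref{lemma:A(n)} implies that each $G(n)$ is $\kappa$-compact, and then the countable coproduct $\tilde G = \bigoplus_{i<\omega} G(i)$ is $\kappa'$-compact, since a coproduct of fewer than $\kappa'$ many $\kappa'$-compact objects remains $\kappa'$-compact.

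Next, given $S$, I would build a transfinite tower $(S_\alpha)_{\alpha\leq\kappa'}$ with $S_0 := S$. At a successor step $\alpha \mapsto \alpha+1$, form $S_{\alpha+1}$ from $S_\alpha$ by combining two operations: (i) a pushout along the coproduct morphism $\bigoplus_\phi (1-I): \bigoplus_\phi \tilde G \to \bigoplus_\phi \tilde G$ over the tautological map $\bigoplus_\phi \tilde G \to S_\alpha$ indexed by all $\phi \in \AA^{\omega^{\scaleobj{0.7}{\rm op}}}(\tilde G, S_\alpha)$ — this freely adjoins a lift of every such $\phi$ along $1-I$; and (ii) the cokernel quotient by the image of $\bigoplus_\psi \tilde G \to S_\alpha$ over all $\psi: \tilde G \to S_\alpha$ with $\psi \circ (1-I)=0$ — this kills every such $\psi$. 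At limit stages take colimits, and set $\mathcal{L}(S) := \varinjlim_{\alpha<\kappa'} S_\alpha$, equipped with the canonical morphism $S \to \mathcal{L}(S)$.

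To finish I would check locality and universality using $\kappa'$-compactness of $\tilde G$. Any morphism $\chi: \tilde G \to \mathcal{L}(S)$ factors through some $S_\alpha$ with $\alpha<\kappa'$, and the lift added in step (i) at stage $\alpha+1$ survives to $\mathcal{L}(S)$, giving surjectivity of $(1-I)^*$. If furthermore $\chi\circ(1-I)=0$ in $\mathcal{L}(S)$, compactness forces this equality to hold already in some $S_{\alpha'}$ with $\alpha'\geq\alpha$, and step (ii) at $\alpha'+1$ then kills $\chi$, giving injectivity. So $\mathcal{L}(S)$ is $(1-I)$-local, hence local. Universality among maps $S\to L$ with $L$ local follows by transfinite induction: at each successor stage the $(1-I)$-locality of $L$ uniquely extends $S_\alpha \to L$ to $S_{\alpha+1} \to L$ (the adjoined lifts from (i) are determined uniquely by locality, and the killed maps from (ii) vanish in $L$ automatically), and colimits handle limit stages. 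The main obstacle is that the operations (i) and (ii) generate further defects at each step, so one must verify the process stabilizes after $\kappa'$ iterations; this is exactly where $\kappa'$-compactness of $\tilde G$ enters, in the standard pattern of a small object argument.
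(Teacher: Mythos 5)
Your proposal is correct and follows essentially the same route as the paper: both reduce the problem, via Proposition \ref{prop:1-I-local}, to checking that $\tilde G$ is $\kappa'$-compact for a possibly enlarged regular cardinal, and then obtain the reflection onto $(1-I)$-local objects by the small object argument for orthogonal subcategories. The only difference is that the paper outsources this last step to the result of Casacuberta, Peschke and Pfenniger, whereas you carry out the transfinite construction explicitly; the steps you sketch (pushouts adjoining lifts along $1-I$, quotients enforcing injectivity of $(1-I)^*$, and factorization of maps from $\tilde G$ through stages of the $\kappa'$-indexed colimit by compactness) constitute the standard proof of that cited result.
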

\begin{proof}  
Without loss of generality we can assume that  $\kappa$ is uncountable. 
Then any countable direct sum of $\kappa$-compact objects is $\kappa$-compact \cite[Prop.1.16]{adamek1994locally}. Since $G$ is $\kappa$-compact and colimits of inverse sequences are computed level-wise, using Lemma \ref{lemma:A(n)} we obtain that $G(i)$ is also $\kappa$-compact. Therefore $\tilde G=\bigoplus_i G(i)$ is $\kappa$-compact as well. Hence the assertion follows from Proposition \ref{prop:1-I-local} combined with the result of Casacuberta, Peschke and Pfenniger 
\cite[Cor. 1.7]{casacuberta1991orthogonal}.
\end{proof}

\section{Transfinite Mittag-Leffler condition}

\begin{theorem}\label{th:gen_ML}
Let $S$ be an inverse sequence in $\AA.$ Then the following statements are equivalent:
\begin{enumerate}
\item ${\lim}^1S=0;$
\item  $\lim {\rm Coker}(S\to \widehat S^\lambda)=0$ for any limit ordinal $\lambda$;
\item for a limit ordinal $\lambda,$ if cofinality of $\lambda$ is countable, then $S$ is $\lambda$-complete, if the cofinality of $\lambda$ is uncountable, then  $\lim {\rm Coker}(S\to \widehat S^{\lambda})=0.$
\end{enumerate}
\end{theorem}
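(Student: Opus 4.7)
The plan is to establish $(1)\Rightarrow(2)$, $(2)\Rightarrow(1)$ and $(1)\Rightarrow(3)$; the implication $(3)\Rightarrow(2)$ is immediate, since $\Coker(S \to \widehat S^\lambda) = 0$ forces $\lim \Coker = 0$.

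For $(1)\Rightarrow(2)$, I fix a limit ordinal $\lambda$ and set $C = \Coker(S \to \widehat S^\lambda)$. The monomorphism \eqref{eq:S^lambda-mono} yields a short exact sequence $S^\lambda \mono \widehat S^\lambda \epi C$; Proposition \ref{prop:limI^alpha} gives $\lim S^\lambda = \lim \widehat S^\lambda = 0$, so the long exact sequence produces an injection $\lim C \hookrightarrow \lim^1 S^\lambda$. The derived long exact sequence \eqref{eq:ses_lim^1} presents $\lim^1 S^\lambda$ as a quotient of $\lim^1 S = 0$, hence $\lim C = 0$.

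For $(2)\Rightarrow(1)$, I prove $\lim^1 S^\alpha = 0$ by transfinite induction on $\alpha$; combined with Proposition \ref{prop:mu} at $\alpha = \mathrm{len}(S)$, this yields $\lim^1 S = 0$. The base $\alpha = 0$ is trivial. At a successor step, the null first term of \eqref{eq:ses_N} gives $\lim^1 S^{\alpha+1} \cong \lim^1 S^\alpha$. At a limit ordinal $\lambda$, the inductive hypothesis together with Proposition \ref{prop:limI^alpha} makes each $S^\beta$ ($\beta<\lambda$) local, so Corollary \ref{cor:closed_under_limits} gives that $\widehat S^\lambda = \lim_{\beta<\lambda} S^\beta$ is also local, i.e.\ $\lim \widehat S^\lambda = \lim^1 \widehat S^\lambda = 0$. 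The SES $S^\lambda \mono \widehat S^\lambda \epi C$ with $\lim C = 0$ (by hypothesis) and $\lim^1 \widehat S^\lambda = 0$ then forces $\lim^1 S^\lambda \hookrightarrow 0$.

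For $(1)\Rightarrow(3)$, the uncountable cofinality clause is the special case of $(1)\Rightarrow(2)$. For countable cofinality, the crucial tool is the snake-lemma isomorphism $\Coker(S \to \widehat S^\lambda) \cong \Coker(T \to \widehat T^\mu)$, where $T = I^{\alpha_1}(S)$ and $\mu = \lambda - \alpha_1$ for any $0 < \alpha_1 < \lambda$; this comes from comparing the SES $T \mono S \epi S^{\alpha_1}$ with the SES $\widehat T^\mu \mono \widehat S^\lambda \epi S^{\alpha_1}$, the latter produced by Roos' theorem on the epimorphic $\omega$-tower $(I^{\alpha_1}/I^{\alpha_1+\gamma_n})_n$ for cofinal $(\gamma_n)\subseteq\mu$. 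The base case $\lambda = \omega$ is handled directly: given $(\bar s_n) \in (\widehat S^\omega)_i$ represented by $s_n \in S_i$ with $e_n = s_{n+1} - s_n \in (I^n)_i$, lift each $e_n$ to $\tilde e_n \in S_{i+n}$, form $(z_j)$ with $z_{i+n} = \tilde e_n$ and $z_j = 0$ otherwise, use the surjectivity of $1 - F^{(S)}$ (equivalent to $\lim^1 S = 0$) to produce $(w_j)$, and verify that $s = s_1 + w_i$ lifts $(\bar s_n)$; this element-theoretic argument transfers to general $\AA$ via the generator $G$. The inductive step is transfinite on $\lambda$: for decomposable $\lambda$ with $\mu < \lambda$, the inductive hypothesis applied to $T$ at $\mu$ (using $\lim^1 T = 0$ from \eqref{eq:ses_lim^1}) closes the argument, while the additively indecomposable case $\mu = \lambda$ demands a nested induction on $\mathrm{len}(S)$, exploiting that iteration of the reduction ultimately reaches the epimorphic part $I^{\mathrm{len}(S)}(S)$, which is trivially $\lambda$-complete. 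The main obstacle is precisely this nested argument through indecomposable $\lambda$: because the reduction preserves $\lambda$ in that case, the termination of the induction requires careful bookkeeping on $\mathrm{len}(S)$ and its interaction with the transfinite image filtration.
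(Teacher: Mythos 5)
Your arguments for $(1)\Rightarrow(2)$, $(2)\Rightarrow(1)$ and $(3)\Rightarrow(2)$ are correct and essentially identical to the paper's. The gap is in $(1)\Rightarrow(3)$ for limit ordinals of countable cofinality, in two places. First, the base case $\lambda=\omega$: your element-theoretic argument is fine for abelian groups, but the claim that it ``transfers to general $\AA$ via the generator $G$'' does not hold up. The functor $\AA(G,-)$ is faithful and preserves limits, but it does not preserve epimorphisms (that would require $G$ projective), so neither the lifts $\tilde e_n$ of $e_n$ through the epimorphism $S_{i+n}\epi I^n(S)_i$ nor the surjectivity of $(1-F)_*$ on $G$-points are available; the entire element chase collapses at the first lifting step. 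Second, and more seriously, your induction on $\lambda$ does not close. Your reduction $\Coker(S\to\widehat S^\lambda)\cong\Coker(I^{\alpha_1}(S)\to\widehat{I^{\alpha_1}(S)}^{\mu})$ is plausible, but for additively indecomposable $\lambda$ it returns $\mu=\lambda$, and when ${\rm len}(S)\geq\lambda$ the replacement $S\rightsquigarrow I^{\alpha_1}(S)$ (with $\alpha_1<\lambda$) still has image filtration of length $\geq\lambda$, so no well-founded invariant decreases. The ``nested induction on ${\rm len}(S)$'' you invoke would have to iterate the reduction transfinitely and control the cokernel at limit stages of that iteration, where a new interchange of limits (with its own $\lim^1$ obstruction) appears; you acknowledge this as the main obstacle but do not resolve it, and it is not clear that it can be resolved along these lines.

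For comparison, the paper avoids both issues with a single uniform trick that needs no induction on $\lambda$ and no elements. Choosing a strictly increasing sequence $\alpha_i\to\lambda$, one replaces it by the shifted cofinal sequence $\alpha_i+i$, so that $\widehat S^\lambda_0=\lim_i S^{\alpha_i+i}_0$; the point of the shift is that $I^{\alpha_i+i}(S)_0$ receives an epimorphism from $I^{\alpha_i}(S)_i$. The exact sequence $S_0\to\widehat S^\lambda_0\to{\lim}^1_i I^{\alpha_i+i}_0$ then reduces the problem to ${\lim}^1_i I^{\alpha_i}_i=0$, which follows from the exact sequence $\lim_i S_i^{\alpha_i}\to{\lim}^1_i I^{\alpha_i}_i\to{\lim}^1 S$ together with ${\lim}^1 S=0$ and the computation $\lim_i S_i^{\alpha_i}=\lim_{(i,j)}S_i^{\alpha_j}=\lim\widehat S^\lambda=0$, using that the diagonal is cofinal in $\omega\times\omega$ and Proposition \ref{prop:limI^alpha}. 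You may want to replace your base case and induction by this diagonal argument; your snake-lemma reduction, while not needed, is a correct observation that could stand on its own as a lemma.
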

\begin{proof} 
For the sake of convenience we set $C^\lambda={\rm Coker}(S\to \widehat S^\lambda).$ 

$(1)\Rightarrow (2).$ Using the short exact sequence $S^\lambda\mono \widehat S^\lambda \epi C^\lambda$ (see \eqref{eq:S^lambda-mono}), we obtain that there is an exact sequence $ \lim \widehat S^\lambda \to \lim C^\lambda \to \lim^1 S^\lambda.$ So it is sufficient to check that $\lim \widehat S^\lambda =0$ and $\lim^1 S^\lambda =0.$ The first equality follows from Proposition \ref{prop:limI^alpha}. The second equality follows from the fact that there is an epimorphism $S\epi S^\lambda$ and  $\lim^1 S=0$ (see \eqref{eq:ses_lim^1}). 

$(2)\Rightarrow (1).$ By Proposition \ref{prop:mu} it is sufficient to prove that $S^\alpha$ is local for any $\alpha.$ Prove it by the transfinite induction. For $\alpha=0$ it is obvious. Assume that $S^\alpha$ is local, and prove that $S^{\alpha+1}$ is local. It follows from the short exact sequence $(I^\alpha)^1\mono S^{\alpha+1}\epi S^\alpha$ and the fact that  $(I^\alpha)^1$ is null.

Now assume that $\lambda$ is a limit ordinal and $S^\alpha$ is local for any $\alpha<\lambda,$ and prove that $S^\lambda$ is local. Since local inverse sequences are closed with respect to small limits (Corollary \ref{cor:closed_under_limits}), $\widehat S^\lambda$ is local. The short exact sequence $S^\lambda \mono \widehat S^\lambda \epi C^\lambda$ implies that there is an exact sequence $\lim C^\lambda \to  \lim^1 S^\lambda \to \lim^1 \widehat S^\lambda.$ By the assumption $\lim C^\lambda=0.$ Since $\widehat S^\lambda$ is local, $\lim^1 \widehat S^\lambda=0.$ Therefore $\lim^1 S^\lambda=0.$ By Proposition \ref{prop:limI^alpha} we have $\lim S^\lambda=0.$ Therefore $S^\lambda$ is local.

$(3)\Rightarrow (2).$ Obvious. 

$(1)\&(2) \Rightarrow (3).$ Take an ordinal $\lambda$ with countable cofinality. Since a shifted inverse sequence has the same $\lim$ and $\lim^1$ \eqref{eq:shift}, it is sufficient to prove that $S_0\to \widehat S^\lambda_0$ is an epimorphism. Let $\alpha_i$ be a strictly increasing sequence of ordinals that tends to $\lambda.$ 
Then the sequence $\alpha_i+i$ is  also strictly increasing and tends to $\lambda.$ Therefore $\widehat S^\lambda_0 = \lim_i S^{\alpha_i+i}_0.$ The short exact sequence $I^{\alpha_i+i}_0 \mono S_0 \epi S^{\alpha_i+i}_0$ implies that there is an exact sequence $S_0 \to \widehat S^\lambda_0 \to \lim^1_i I^{\alpha_i+i}_0.$ Therefore, it is sufficient to prove that $\lim^1_i I^{\alpha_i+i}_0=0.$ There is an epimorphism $I^{\alpha_i}_i \epi I^{\alpha_i+i}_0.$ Therefore, it is sufficient to prove that $\lim^1_i I^{\alpha_i}_i=0.$ Consider the short exact sequence
$I_i^{\alpha_i}\mono S_i \epi S_i^{\alpha_i}.$ Then we have an exact sequence $ \lim_i S_i^{\alpha_i} \to \lim^1_i I_i^{\alpha_i} \to \lim^1 S.$ By the assumption $\lim^1 S =0.$ Then it is sufficient to show that $\lim_i S_i^{\alpha_i}=0.$ Since the diagonal $\{(i,i)\mid i\in \omega \}$ is cofinal in $\omega\times \omega,$ we have $\lim_i S_i^{\alpha_i} = \lim_{(i,j)\in \omega\times \omega} S_i^{\alpha_j} = \lim_i \lim_j S_i^{\alpha_j} = \lim_i \widehat S_i^\lambda = \lim \widehat S^\lambda.$ Then the assertion follows from Proposition \ref{prop:limI^alpha}.
\end{proof}

\begin{corollary}
If $S$ is $\lambda$-complete for any limit ordinal $\lambda,$ then $\lim^1S=0.$ 
\end{corollary}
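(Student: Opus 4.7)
The plan is to derive this directly from Theorem \ref{th:gen_ML}, since the corollary is essentially the easy implication packaged as a standalone consequence. The strategy is to verify condition (2) of the theorem and then invoke the equivalence $(2) \Rightarrow (1)$ to conclude $\lim^1 S = 0$.

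First I would unpack the definition: $S$ being $\lambda$-complete means by definition that the canonical map $S \to \widehat S^\lambda$ is an epimorphism. Therefore its cokernel vanishes: ${\rm Coker}(S \to \widehat S^\lambda) = 0$. Taking the limit of the zero inverse sequence gives $\lim {\rm Coker}(S \to \widehat S^\lambda) = 0$. Doing this for every limit ordinal $\lambda$ establishes exactly condition (2) of Theorem \ref{th:gen_ML}.

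Next, I would simply invoke the theorem: since (2) $\Rightarrow$ (1), we conclude $\lim^1 S = 0$. No additional work should be needed, and there is really no significant obstacle here — the entire content of the corollary is already contained in the main theorem, and the corollary serves to highlight one direction of the equivalence. The only thing worth being slightly careful about is noting that the hypothesis is genuinely stronger than needed: the theorem tells us via condition (3) that we only need $\lambda$-completeness at limit ordinals of countable cofinality (supplemented by the weaker cokernel-limit condition at uncountable cofinalities), so the corollary states a clean but non-optimal sufficient criterion.
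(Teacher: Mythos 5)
Your proof is correct and is exactly the intended argument: the paper states this corollary without proof precisely because $\lambda$-completeness for all limit $\lambda$ gives ${\rm Coker}(S\to\widehat S^\lambda)=0$, hence condition (2) of Theorem \ref{th:gen_ML}, and $(2)\Rightarrow(1)$ yields $\lim^1S=0$. Your remark that condition (3) shows the hypothesis is stronger than necessary is also accurate.
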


\begin{corollary}
If an inverse sequence $S$ (in an abelian category $\AA$ with a generator, small direct sums and exact small products) satisfies the Mittag-Leffler condition, then $\lim^1 S=0.$
\end{corollary}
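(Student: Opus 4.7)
The plan is to apply Theorem~\ref{th:gen_ML} via condition~(2); in fact, I will show the stronger fact that the morphism $S \to \widehat{S}^\lambda$ is an epimorphism for every limit ordinal $\lambda$, so that $\Coker(S \to \widehat{S}^\lambda) = 0$ and condition~(2) trivially holds.

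The first step is to rephrase the Mittag-Leffler hypothesis in terms of the transfinite image filtration. For each index $i$ the hypothesis furnishes an integer $n_i$ with $I^n(S)_i = I^{n_i}(S)_i$ for every $n \geq n_i$. A stabilizing decreasing chain of subobjects has its limit equal to the stable value (the transition monomorphisms are eventually isomorphisms, providing mutually inverse maps between the limit and the stable term), so $I^\omega(S)_i = I^{n_i}(S)_i$. Using this identification, I would then check that $I^\omega(S)$ is epimorphic: the map $I^\omega(S)_{i+1} \to I^\omega(S)_i$ is induced by $f_i$, and taking $N \geq \max(n_i, n_{i+1})$ one identifies it with the restriction $f_i|_{I^N(S)_{i+1}} : I^N(S)_{i+1} \to I^N(S)_i$, whose image is $I^{N+1}(S)_i = I^{n_i}(S)_i = I^\omega(S)_i$ by stabilization.

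The second step is to show that the transfinite filtration stabilizes at $\omega$. Since $I^\omega(S)$ is epimorphic, one has $I(I^\omega(S)) = I^\omega(S)$, which handles the successor step; the limit case is immediate since the system becomes constant. By transfinite induction, $I^\alpha(S) = I^\omega(S)$ and hence $S^\alpha = S^\omega$ for all $\alpha \geq \omega$.

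The third step verifies the epimorphy of $S \to \widehat{S}^\lambda$ for every limit~$\lambda$. When $\lambda = \omega$, the tower $\{S_i/I^n(S)_i\}_n$ is eventually constant at $S_i/I^{n_i}(S)_i$ at each level~$i$, hence $\widehat{S}^\omega_i = S_i/I^{n_i}(S)_i = S^\omega_i$, so $S \to \widehat{S}^\omega$ coincides with the quotient $S \to S^\omega$, which is an epimorphism. When $\lambda > \omega$, the cofinal subsystem $\{S^\alpha\}_{\omega \leq \alpha < \lambda}$ is constant at $S^\omega$ by the second step, so $\widehat{S}^\lambda = S^\omega$ and again $S \to \widehat{S}^\lambda$ is the quotient map. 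In all cases $\Coker(S \to \widehat{S}^\lambda) = 0$, so condition~(2) of Theorem~\ref{th:gen_ML} holds and $\lim^1 S = 0$. The only subtle point to be careful about is the identification $I^\omega(S)_i = I^{n_i}(S)_i$ in the first step, which rests on the stabilization of the decreasing chain of subobjects; everything else is a direct unwinding of the definitions.
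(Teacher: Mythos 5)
Your proof is correct and follows the route the paper intends: the paper states this as an unproved corollary of Theorem \ref{th:gen_ML}, and the natural derivation is exactly yours --- the Mittag-Leffler condition forces the image filtration to stabilize level-wise at $\omega$ with $I^\omega(S)$ epimorphic, whence $\widehat S^\lambda\cong S^\omega$ and $S\to\widehat S^\lambda$ is an epimorphism for every limit ordinal $\lambda$, so condition (2) (equivalently, $\lambda$-completeness for all $\lambda$, as in the preceding corollary) applies. The one point you flag as subtle --- that a stabilizing decreasing chain of subobjects has limit equal to its stable value --- is indeed fine by cofinality, so no gaps remain.
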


\begin{corollary}
If $\lim^1 S=0$ and $\lambda$ is a limit ordinal of countable cofinality, then $S$ is $\lambda$-complete. 
\end{corollary}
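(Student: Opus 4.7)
Since this corollary is exactly the countable-cofinality half of the implication $(1)\Rightarrow(3)$ of Theorem \ref{th:gen_ML}, the shortest route is simply to invoke that theorem. Below I sketch a direct argument that avoids passing through condition (2), essentially isolating the countable-cofinality strand of the theorem's proof.

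First, the shift isomorphisms of \eqref{eq:shift} (applied iteratively) reduce the problem to showing that the component $S_0 \to \widehat S^\lambda_0$ is an epimorphism. Since the cofinality of $\lambda$ is countable, I pick a strictly increasing cofinal sequence $\alpha_0 < \alpha_1 < \dots$ in $\lambda$ and replace it by the diagonally shifted sequence $\beta_i := \alpha_i + i$, which remains strictly increasing and cofinal in $\lambda$. Then $\widehat S^\lambda_0 \cong \lim_i S^{\beta_i}_0$, and the $\lim^1$-exact sequence applied to the family of short exact sequences $I^{\beta_i}_0 \mono S_0 \epi S^{\beta_i}_0$ (indexed by $i \in \omega$) shows that $S_0 \to \widehat S^\lambda_0$ is an epimorphism provided $\lim^1_i I^{\beta_i}_0 = 0$.

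To establish this vanishing, the iterated transition map $S_i \to S_0$ restricts to an epimorphism $I^{\alpha_i}_i \epi I^{\alpha_i+i}_0 = I^{\beta_i}_0$, so it suffices to prove $\lim^1_i I^{\alpha_i}_i = 0$. Apply the $\lim^1$-exact sequence to the short exact sequence of inverse sequences (in the index $i$) $I^{\alpha_i}_i \mono S_i \epi S^{\alpha_i}_i$: the middle inverse sequence is just $S$, so its $\lim^1$ vanishes by hypothesis, and it remains to prove $\lim_i S^{\alpha_i}_i = 0$. Cofinality of the diagonal inside $\omega \times \omega$ identifies this limit with $\lim_i \lim_j S^{\alpha_j}_i = \lim \widehat S^\lambda$, which is zero by Proposition \ref{prop:limI^alpha}.

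The main obstacle in carrying out this direct route is the diagonal-shift bookkeeping: verifying that $\beta_i$ remains cofinal, justifying the epimorphism $I^{\alpha_i}_i \epi I^{\beta_i}_0$ (which needs transfinite control over how the image filtration behaves under composition of transition maps), and checking that the short exact sequences of inverse sequences indexed by $i$ are genuinely exact so that the $\lim^1$-sequence applies. Once these are in place, everything else is routine via left-exactness of $\lim$ and the snake lemma.
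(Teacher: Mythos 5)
Your proposal is correct and follows exactly the route the paper takes: the corollary is an immediate consequence of Theorem \ref{th:gen_ML}, and your direct sketch (shift reduction, the diagonal sequence $\alpha_i+i$, the epimorphism $I^{\alpha_i}_i\epi I^{\alpha_i+i}_0$, the six-term exact sequence for $I^{\alpha_i}_i\mono S_i\epi S^{\alpha_i}_i$, and the identification $\lim_i S_i^{\alpha_i}=\lim\widehat S^\lambda=0$ via cofinality of the diagonal in $\omega\times\omega$ and Proposition \ref{prop:limI^alpha}) reproduces the countable-cofinality strand of the paper's proof of $(1)\&(2)\Rightarrow(3)$ essentially verbatim. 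No gaps.
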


\section{A description of the class of local inverse sequences}

If we have a short exact sequence of inverse sequences
$N \mono S' \epi S$
such that $N$ is null, we say that $S'$ is a \emph{null extension} of $S.$ We think about null extensions of inverse sequences as analogues of central extensions of groups.

\begin{theorem}\label{th:description_of_local}
The class of local inverse sequences in $\AA$ is the least class containing the zero inverse sequence and closed with respect to small limits and null-extensions. 
\end{theorem}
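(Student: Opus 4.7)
The proof runs in two directions. For the inclusion of $\mathcal{K}$ in the class of local inverse sequences, observe that $0$ is local, that locals are closed under small limits by Corollary \ref{cor:closed_under_limits}, and that they are closed under null-extensions: given a short exact sequence $N\mono S'\epi S$ with $N$ null, the long exact sequence for $\lim,\lim^1$ combined with $\lim N=\lim^1 N=0$ yields $\lim S'\cong\lim S$ and $\lim^1 S'\cong\lim^1 S$, so $S$ local forces $S'$ to be local.

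For the converse, the plan is to prove by transfinite induction on $\mu={\rm len}(L)$ that every local $L$ belongs to $\mathcal{K}$. The base case $\mu=0$ gives $L=I^{{\rm len}(L)}(L)=0$ by Corollary \ref{cor:limS=0}. A key computational tool is the formula
\[I^\gamma(L^\nu)=(I^\gamma(L)+I^\nu(L))/I^\nu(L)\]
valid for all ordinals $\gamma,\nu$; one proves it by transfinite induction on $\gamma$, using that $I$ preserves sums of subobjects, that $I(N)\subseteq N$ for every sub-inverse-sequence $N$, and that at limit stages $\lim$ commutes with quotients by a constant subobject. This gives ${\rm len}(L^\nu)\leq\nu$ for every $\nu$. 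In the successor case $\mu=\nu+1$, the equation $I(I^\nu L)=I^{\nu+1}L=0$ shows that $I^\nu(L)$ is null, so $I^\nu(L)\mono L\epi L^\nu$ is a null-extension; since ${\rm len}(L^\nu)\leq\nu<\mu$, the induction hypothesis places $L^\nu$ in $\mathcal{K}$, and hence $L\in\mathcal{K}$.

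For a limit ordinal $\mu$, the length bound gives $L^\beta\in\mathcal{K}$ for every $\beta<\mu$, and therefore $\widehat L^\mu=\lim_{\beta<\mu}L^\beta\in\mathcal{K}$ by closure under small limits. If the cofinality of $\mu$ is countable, Theorem \ref{th:gen_ML} guarantees that $L$ is $\mu$-complete, so $L=L^\mu=\widehat L^\mu\in\mathcal{K}$ and the induction closes.

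The main obstacle is the subcase in which $\mu$ has uncountable cofinality, where by Theorem \ref{th:example2} $L$ can fail to be $\mu$-complete. Writing $C^\mu$ for the cokernel of the canonical monomorphism $L\mono\widehat L^\mu$, Theorem \ref{th:gen_ML} still gives $\lim C^\mu=0$, and the long exact sequence for $\lim,\lim^1$ applied to $L\mono\widehat L^\mu\epi C^\mu$ together with $\lim^1 L=\lim^1\widehat L^\mu=0$ shows that $C^\mu$ is local. Since $L$ is the kernel of $\widehat L^\mu\epi C^\mu$ and kernels are small limits, it would suffice to place $C^\mu$ itself in $\mathcal{K}$. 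I would attempt this by establishing the bound ${\rm len}(C^\mu)<\mu$ so that the induction hypothesis applies to $C^\mu$, using the quotient formula above to control the transfinite image filtration of $C^\mu$ in terms of those of $\widehat L^\mu$ and its subobject $L$. Establishing this bound when $\mu$ has uncountable cofinality is where I expect the most delicate work in the proof to live.
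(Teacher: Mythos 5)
Most of your proposal is sound and close in spirit to the paper's argument: the easy inclusion, the identity $I^\gamma(L^\nu)=(I^\gamma(L)+I^\nu(L))/I^\nu(L)$ (which is correct --- at limit stages it follows because pulling back subobjects along $L\epi L^\nu$ identifies the subobject lattice of $L^\nu$ with the interval above $I^\nu(L)$, and this identification preserves meets), the successor step via the null-extension $I^\nu(L)\mono L\epi L^\nu$, and the countable-cofinality limit case via $\mu$-completeness from Theorem \ref{th:gen_ML} are all fine. But the proof is not complete: at the uncountable-cofinality limit case you reduce everything to the claim ${\rm len}(C^\mu)<\mu$, for which you give no argument, and which I see no reason to believe --- $C^\mu$ is just some local cokernel, and nothing controls the length of its image filtration in terms of $\mu$. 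Since this is exactly the case you yourself identify as the crux (and the one Theorem \ref{th:example2} shows cannot be avoided), the gap is fatal as the argument stands. Note also that even granting $C^\mu$ local, trying to place $C^\mu$ itself in $\mathcal{K}$ is circular in flavor: it is another local inverse sequence of unknown complexity.

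The paper closes this case by a different device that never requires $C^\lambda\in\mathcal{L}$ nor any bound on ${\rm len}(C^\lambda)$. For each ordinal $\beta$ it forms the pullback $J^\beta$ of $\widehat S^\lambda\epi C^\lambda\leftarrowtail I^\beta(C^\lambda)$, so that $S^\lambda\mono J^\beta\mono\widehat S^\lambda$, $J^0=\widehat S^\lambda$, and $J^{{\rm len}(C^\lambda)}=S^\lambda$ because $\lim C^\lambda=0$ forces $I^{{\rm len}(C^\lambda)}(C^\lambda)=0$ (Corollary \ref{cor:limS=0}). The snake lemma identifies $\Coker(J^{\beta+1}\mono J^\beta)$ with the \emph{null} inverse sequence $(I^\beta(C^\lambda))^1$, so $J^{\beta+1}$ is the kernel of a morphism between two members of $\mathcal{L}$, hence in $\mathcal{L}$ by closure under small limits; at limit ordinals $J^\mu=\lim_{\beta<\mu}J^\beta$. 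This secondary transfinite descent along the image filtration of $C^\lambda$ --- rather than an appeal to an induction hypothesis about $C^\lambda$ --- is the missing idea. If you want to salvage your write-up, replace the paragraph about ${\rm len}(C^\mu)<\mu$ with this interpolating filtration; the rest of your structure can stay.
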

\begin{proof}
Corollary \ref{cor:closed_under_limits} says that the class of local inverse sequences is closed under limits. It is obviously closed with respect to null extensions. Let us prove that it is the least class satisfying these properties. For this we fix a class satisfying this properties $\mathcal{L}$ and prove that any local inverse sequence $S$ is in $\mathcal{L}.$ Note that, since $\mathcal{L}$ is closed with respect to null extensions and contains zero inverse sequence, all null inverse sequences are in $\mathcal{L}.$ Also note that an isomorphism $S\cong S'$ can be treated as an extension with zero kernel. So if $S\cong S'$ and $S\in \mathcal{L},$ then $S'\in \mathcal{L}.$

Assume that $S$ is a local inverse sequence and prove that $S\in \mathcal{L}$. By Corollary \ref{cor:limS=0} we have $S=S^{{\rm len}(S)}.$ Then it is sufficient to prove by transfinite induction that for any ordinal $\alpha$ we have $S^\alpha\in \mathcal{L}.$ For $\alpha=0$ it is obvious. If $S^\alpha\in \mathcal{L}$ then the short exact sequence \eqref{eq:ses_N} implies that $S^{\alpha+1}\in \mathcal{L}.$

Let $\lambda$ is a limit ordinal and assume that for any $\alpha<\lambda$ we have $S^\alpha\in \mathcal{L}.$ The rest of the proof is devoted to the proof that $S^\lambda\in \mathcal{L}.$ Since $\mathcal{L}$ is closed under small limits, we get $\widehat S^\lambda \in \mathcal{L}.$ Recall that we have a monomorphism $S^\lambda\mono \widehat S^\lambda$ and set $C^\lambda:=\Coker(S^\lambda\mono \widehat S^\lambda).$ 
For any ordinal $\beta$ we define a decomposition of this monomorphism into two monomorphisms 
\begin{equation}
S^\lambda \mono J^\beta \mono \widehat S^\lambda,
\end{equation}
where $J^\beta$ is defined as the pullback
\begin{equation}\label{eq:pullback_J}
\begin{tikzcd}
J^\beta \ar[d,twoheadrightarrow] \ar[r,rightarrowtail]
& 
\widehat S^\lambda 
\ar[d,twoheadrightarrow]
\\
I^\beta(C^\lambda) \ar[r,rightarrowtail]
& 
C^\lambda.
\end{tikzcd}
\end{equation}
By Theorem \ref{th:gen_ML} we have $\lim C^\lambda=0.$ By Corollary \ref{cor:limS=0} we obtain $I^{{\rm len}(C^\lambda)}(C^\lambda)=0.$ Therefore 
\begin{equation}
S^\lambda = J^{{\rm len}(C^\lambda)}.    
\end{equation}
It follows that in order to complete the proof it is sufficient  to check that $J^\beta\in \mathcal{L}$ for any $\beta.$ Let us do it.

For $\beta=0$ we have $J^0=\widehat S^\lambda\in \mathcal{L}.$  Assume that  $J^\beta \in \mathcal{L}$ and prove that $J^{\beta+1}\in \mathcal{L}.$ 
Since \eqref{eq:pullback_J} is a pullback, the kernels of the vertical arrows are isomorphic 
\begin{equation}\label{eq:ker_J}
\Ker(J^\beta \epi I^\beta(C^\lambda)) \cong S^\lambda.
\end{equation}
It follows that $\Ker(J^\beta\to I^\beta(C^\lambda))\cong \Ker(J^{\beta+1} \to I^{\beta+1}(C^\lambda)).$ Therefore, the snake lemma implies that the right hand vertical arrow in the following diagram is an isomorphism
\begin{equation}
\begin{tikzcd}
J^{\beta+1} \ar[d,twoheadrightarrow] \ar[r,rightarrowtail]
& 
J^\beta 
\ar[d,twoheadrightarrow] \ar[r,twoheadrightarrow]
&
\Coker(J^{\beta+1}\to J^\beta)\ar[d,"\cong"]
\\
I^{\beta+1}(C^\lambda) \ar[r,rightarrowtail]
& 
I^\beta(C^\lambda)
\ar[r,twoheadrightarrow]
& 
(I^\beta(C^\lambda))^1.
\end{tikzcd}
\end{equation}
Since  $(I^\beta(C^\lambda))^1$ is null, we have $(I^\beta(C^\lambda))^1\in \mathcal{L}.$ By the assumption $J^\beta\in \mathcal{L}.$ Therefore $J^{\beta+1}$ is a kernel of a morphism between two objects of $\mathcal{L}.$ Since the kernel is a limit, we obtain $J^{\beta+1}\in \mathcal{L}.$ 

Now assume that for a limit ordinal $\mu$ and any $\beta<\mu$ we have $J^\beta \in \mathcal{L}.$ Since limits commute with limits, the equality  $I^\mu(C^\lambda)=\lim_{\beta<\mu} I^{\beta}(C^\lambda)$ implies the isomorphism $J^\mu \cong \lim_{\beta<\mu} J^\beta.$ Therefore $J^\mu\in \mathcal{L}.$ So we proved that $J^\beta \in \mathcal{L}$ for any ordinal $\beta.$
\end{proof}

\section{Examples of inverse sequences of abelian groups}

\subsection{Inverse sequences defined by one abelian group}

Further we fix a prime $p.$
For an abelian group $A$ and an ordinal $\alpha$ we denote by $p^\alpha A$ a subgroup of $A$ defined so that $p^0A=A,$ 
$p^{\alpha+1}A=p\cdot (p^\alpha A)$ and $p^\lambda A = \bigcap_{\alpha<\lambda} p^\alpha A$ for a limit ordinal $\lambda.$ If $A$ is a $p$-group, the group $p^{\omega \alpha}A$ is known as the $\alpha$-th Ulm subgroup. The least $\alpha$ such that $p^\alpha A=p^{\alpha+1}A$ is called the $p$-length of $A$ and denoted by ${\rm len}_p(A)$. We will also use the notation $p^\infty A=p^{{\rm len}_p(A)}A.$ It is known that for any ordinal $\alpha$ there exists an abelian $p$-group $A,$ whose length is equal to $\alpha$ \cite[\S 11, Exercise 43]{kaplansky2018infinite}, \cite[Ch.11, Example 3.2]{fuchs2015abelian}
\begin{equation}
{\rm len}_p(A) = \alpha. 
\end{equation}
Note that for any homomorphism $f:A\to B$ there is an inclusion $f(p^\alpha A)\subseteq p^\alpha B.$ However, in general $f(p^\alpha A)\neq p^\alpha B,$ even if $f$ is an epimorphism.

Consider an inverse sequence $S(A)$ such that $S(A)_i=A$ and $f_i(a)=pa.$ 
\begin{equation}
S(A): \hspace{1cm} A \overset{p\cdot }\longleftarrow A \overset{p\cdot}\longleftarrow A \overset{p\cdot }\longleftarrow \dots
\end{equation}

It is easy to see that 
\begin{equation}
I^\alpha(S(A))=S(p^\alpha A).    
\end{equation}
Therefore the length of the image filtration is equal to the $p$-length of $A.$
\begin{equation}
{\rm len}(S(A)) = {\rm len}_p(A).
\end{equation}
It follows that for any ordinal $\alpha$ there exists an inverse sequence of abelian groups $S$ such that ${\rm len}(S)=\alpha.$ In the next subsection we will construct an explicit abelian group $A$ such that ${\rm len}(S(A))=\alpha$ and $S(A)$ is local. Corollary \ref{cor:limS=0} implies that 
\begin{equation}\label{eq:limS(A)}
\lim S(A) =0 \hspace{5mm} \Leftrightarrow \hspace{5mm} p^\infty A=0.
\end{equation}
Further in this section we construct some examples of abelian groups $A$ such that inverse sequences $S(A)$ satisfy certain properties that confirm the reasonableness of Theorem \ref{th:gen_ML}.

\subsection{A local inverse sequence with a long image filtration}

In this section for any $\alpha$ we will construct an abelian group $E_\alpha$ such that $S(E_\alpha)$ is local and ${\rm len}(S(E_\alpha))=\alpha.$ The group $E_\alpha$ is a variant of Walker's group \cite{walker1974groups}, \cite[Ch.11, Example 3.2]{fuchs2015abelian}. Further in this subsection we fix an ordinal $\alpha.$

Denote by $\alpha^\diamond$ the set of all finite increasing sequences of ordinals $(\alpha_1,\dots,\alpha_n)$ such that  
$\alpha_1< \dots <\alpha_n<\alpha,$ $n\geq 1.$ We endow $\alpha^\diamond$ by the deg-lex order: $(\alpha_1,\dots,\alpha_n)< (\alpha'_1,\dots,\alpha'_{n'})$ if and only if either $n<n',$ or $n=n'$ and there exists $1\leq m\leq n$  such that $\alpha_m<\alpha'_m$ and $\alpha_i=\alpha'_i$ for any $1\leq i<m.$  It easy to check that it is a well order.

Consider the direct product and the direct sum of the group of $p$-adic integers 
$\ZZ_p$ indexed by $\alpha^\diamond$
\begin{equation}
P_\alpha := \ZZ_p^{\prod \alpha^\diamond}, \hspace{1cm} P'_\alpha := \ZZ_p^{\oplus \alpha^\diamond}.
\end{equation} 
We will treat the abelian groups $P_\alpha$ and $P'_\alpha$ as $\ZZ_p$-modules. Note that $P'_\alpha$ is a free $\ZZ_p$-module.

We denote by $(e_\sigma)_{\sigma\in \alpha^\diamond}$ the standard basis of $P'_\alpha$ over $\ZZ_p.$ 
Consider a $\ZZ_p$-submodule  $R_\alpha\subseteq P'_\alpha$ generated by the  elements of the form
\begin{equation}\label{eq:R}
r_{\alpha_1}:=pe_{\alpha_1} , \hspace{1cm}  r_{\alpha_1,\dots,\alpha_n}:=p  e_{\alpha_1,\dots,\alpha_n} - e_{\alpha_2,\dots, \alpha_n}, \hspace{1cm} n\geq 2,
\end{equation}
and set 
\begin{equation}\label{eq:A(alpha)}
D_\alpha := P_\alpha/R_\alpha, \hspace{5mm} 
D'_\alpha := P'_\alpha/R_\alpha,
\hspace{5mm} 
E_\alpha := D_\alpha/p^\alpha D_\alpha.
\end{equation}

\begin{theorem}\label{th:example} 
For any ordinal $\alpha$ the inverse sequence $S(E_\alpha)$ is local and
\begin{equation}
{\rm len}(S(E_\alpha))=\alpha.    
\end{equation}
\end{theorem}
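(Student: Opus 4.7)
The plan is to deduce both claims from the equality $I^\beta(S(A))=S(p^\beta A)$, which holds for any abelian group $A$ and any ordinal $\beta$ by a short transfinite induction (trivial at $0$, using $I(S(B))=S(pB)$ at successors, and the fact that a limit of constant inverse sequences is computed levelwise). This dictionary identifies $\mathrm{len}(S(A))$ with the $p$-length $\mathrm{len}_p(A)$ and, via \eqref{eq:limS(A)}, reduces $\lim S(A)=0$ to $p^\infty A=0$.

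Having set this up, I would first check inductively that $p^\beta E_\alpha = p^\beta D_\alpha/p^\alpha D_\alpha$ for $\beta\le\alpha$; this gives $p^\alpha E_\alpha=0$, hence simultaneously $\lim S(E_\alpha)=0$ and the upper bound $\mathrm{len}_p(E_\alpha)\le\alpha$. The matching lower bound amounts to the claim that $\bar e_{(\beta)}\in p^\beta D_\alpha\setminus p^{\beta+1}D_\alpha$ for every $\beta<\alpha$. The inclusion $\bar e_{(\beta)}\in p^\beta D_\alpha$ follows from the stronger assertion $\bar e_{(\alpha_1,\dots,\alpha_n)}\in p^{\alpha_1}D_\alpha$, which one proves by transfinite induction on $\alpha_1$ using the relation $\bar e_{(\alpha_1,\dots,\alpha_n)} = p\,\bar e_{(\gamma,\alpha_1,\dots,\alpha_n)}$ (for any $\gamma<\alpha_1$) at successors and closure of the filtration under intersections at limits.

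The real work, and the step I expect to be the main obstacle, is the opposite direction $\bar e_{(\beta)}\notin p^{\beta+1}D_\alpha$. The strategy I would pursue is to construct, for each $\beta<\alpha$, a $\ZZ_p$-linear ``valuation'' on $D_\alpha$ (or equivalently, a quotient onto a $p$-group whose transfinite $p$-filtration is transparent) that is strictly compatible with $p$-multiplication and with the defining relations \eqref{eq:R}, and from which one can read off that the $p$-height of $\bar e_{(\beta)}$ is exactly $\beta$. This is the place where the specific Walker-type shape of \eqref{eq:R} is essential, and verifying the compatibility through limit ordinals is the delicate point.

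Locality, by contrast, is essentially formal once one observes that $P_\alpha=\prod_{\sigma\in\alpha^\diamond}\ZZ_p$ is $p$-adically complete and $p$-torsion-free. The standard telescoping $a_n=\sum_{k\ge 0}p^k b_{n+k}$ gives $\lim^1 S(\ZZ_p)=0$, and since $\lim^1$ commutes with small products (AB4$^*$),
\[
\lim^1 S(P_\alpha)=\lim^1 {\prod}_\sigma\, S(\ZZ_p)={\prod}_\sigma \lim^1 S(\ZZ_p)=0.
\]
The composite surjection $P_\alpha\twoheadrightarrow D_\alpha\twoheadrightarrow E_\alpha$ has some kernel $K$, and since the functor $S$ is exact (it acts levelwise), we obtain a short exact sequence $S(K)\mono S(P_\alpha)\epi S(E_\alpha)$. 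The associated six-term exact sequence terminates (as $\lim^n=0$ for $n\ge 2$ on inverse sequences), and the tail
\[
\dots \to \lim^1 S(K)\to \lim^1 S(P_\alpha) \to \lim^1 S(E_\alpha)\to 0
\]
forces $\lim^1 S(E_\alpha)=0$. Combined with $\lim S(E_\alpha)=0$ already established, this shows $S(E_\alpha)$ is local, completing the proof modulo the height computation.
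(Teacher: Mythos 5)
Your reduction to $I^\beta(S(A))=S(p^\beta A)$, the identification $p^\beta E_\alpha=p^\beta D_\alpha/p^\alpha D_\alpha$ (hence $p^\alpha E_\alpha=0$, giving both $\lim S(E_\alpha)=0$ and the upper bound on the length), the inclusion $\bar e_{(\alpha_1,\dots,\alpha_n)}\in p^{\alpha_1}D_\alpha$ via the relations, and the locality argument (telescoping for $S(\ZZ_p)$, exactness of products, then the surjection $P_\alpha\epi E_\alpha$) are all correct, and they follow essentially the same route as the paper; the telescoping argument is if anything more elementary than the paper's appeal to its general machinery.

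However, the step you yourself flag as ``the real work'' --- showing $\bar e_{(\beta)}\notin p^{\beta+1}D_\alpha$ (or at least $\bar e_{(\beta)}\notin p^\alpha D_\alpha$, which is all that is needed) --- is left as an unexecuted strategy, and this is a genuine gap: without it the lower bound ${\rm len}_p(E_\alpha)\geq\alpha$ is unproved and the theorem is not established. Moreover, the plan as stated has a pitfall: you propose a valuation on $D_\alpha$ reading off exact $p$-heights, but the transfinite $p$-filtration of $D_\alpha=P_\alpha/R_\alpha$ is \emph{not} transparent --- the paper's remark shows $p^\infty D_\omega\neq 0$, so no such valuation can exist on all of $D_\alpha$; any workable version must be confined to the countably-supported part $D'_\alpha=P'_\alpha/R_\alpha$. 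The paper's resolution is a normal-form lemma (Lemma \ref{lemma:presentation}): every $t\in P'_\alpha$ is uniquely $t_0+r$ with $r\in R_\alpha$ and all coordinates of $t_0$ in $\{0,\dots,p-1\}$, with control of leading indices and of membership in the submodules $P'_{[\beta,\alpha)}$ spanned by the $e_{\alpha_1,\dots,\alpha_n}$ with $\alpha_1\geq\beta$. This rests on the deg-lex well-ordering of $\alpha^\diamond$ and on the observation that nonzero elements of $R_\alpha$ have leading coordinate in $p\ZZ_p\setminus\{0\}$. From it one deduces $(p^\beta D_\alpha)\cap D'_\alpha={\rm Im}(P'_{[\beta,\alpha)}\to D'_\alpha)$ for all $\beta$ (the limit-ordinal step uses uniqueness of the normal form to pass the intersection $\bigcap_{\beta<\lambda}(P'_{[\beta,\alpha)}+R_\alpha)=P'_{[\lambda,\alpha)}+R_\alpha$), whence $(p^\alpha D_\alpha)\cap D'_\alpha=0$ while $\bar e_{(\beta)}$ is a nonzero element of $(p^\beta D_\alpha)\cap D'_\alpha$. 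Some argument of this kind --- in particular a uniqueness statement strong enough to survive limit ordinals --- is indispensable, and is precisely what your proposal does not supply.
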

The rest of this subsection is devoted to the proof of the theorem. We will need some additional constructions and lemmas for this.

For $t\in P_\alpha$ the elements ${\rm pr}_\sigma(t),$ where $\sigma\in \alpha^\diamond,$ will be called coordinates of $t.$ If $t\in P'_\alpha,$ the leading index  ${\rm li}(t)$ is the 
maximal $\sigma$ (with respect to the deg-lex order) such that 
${\rm pr}_\sigma(t)\ne 0.$ The corresponding coefficient is called the leading coordinate of $t.$

For any ordinal $\beta$ we denote by $P'_{[\beta,\alpha)}$ the free $\ZZ_p$-submodule of $P'_\alpha$ generated by the elements $e_{\alpha_1,\dots,\alpha_n}$ such that $\alpha_1\geq \beta.$ In other words, $ P'_{[\beta,\alpha)}$ is the submodule of $P'_\alpha$ consisting of elements, whose coordinates with indexes $(\alpha_1,\dots,\alpha_n)$ such that $\alpha_1<\beta$ are trivial.

\begin{lemma}\label{lemma:R}
The leading coordinate of an  element of $R_\alpha\setminus\{0\}$ is in $p\ZZ_p\setminus\{0\}.$
\end{lemma}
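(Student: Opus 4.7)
Any element $r \in R_\alpha$ can be written as a finite $\ZZ_p$-linear combination of the generators, so let
\[ r = \sum_{\sigma \in \alpha^\diamond} c_\sigma r_\sigma, \qquad c_\sigma \in \ZZ_p, \]
with only finitely many $c_\sigma$ nonzero and not all zero. My plan is to locate the leading index of $r$ by choosing the right ``index of the coefficient'' rather than directly inspecting the coordinates of $r$. Let $\tau$ be the deg-lex maximum among those $\sigma$ with $c_\sigma \ne 0$, and write $|\sigma|$ for the length of $\sigma \in \alpha^\diamond$. I claim the leading index of $r$ is $\tau$ and the leading coordinate is $p c_\tau$, which is the desired conclusion since $\ZZ_p$ is an integral domain and $c_\tau \neq 0$.

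The key observation is that each generator $r_\sigma$ has two types of contributions: the \emph{leading} contribution $p e_\sigma$ at coordinate $\sigma$, and, when $|\sigma| \geq 2$, the \emph{tail} contribution $-e_{\sigma_2,\dots,\sigma_n}$ at the truncated index of length $|\sigma|-1$. Since the order on $\alpha^\diamond$ is deg-lex, strictly shorter sequences are strictly smaller, so the tail index of $r_\sigma$ is strictly less than $\sigma$ itself. Combined with $\sigma \leq \tau$ (hence $|\sigma| \leq |\tau|$) for every $\sigma$ with $c_\sigma \neq 0$, this gives the uniform bound that every tail index has length $< |\tau|$, and so every tail index is deg-lex $< \tau$.

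From this I would compute coordinates of $r$ in two cases. First, at any index $\sigma' > \tau$ in deg-lex order: no leading contribution reaches $\sigma'$ because $\sigma \leq \tau < \sigma'$, and no tail contribution reaches $\sigma'$ because each tail index is $< \tau < \sigma'$, so the coordinate of $r$ at $\sigma'$ is zero. Second, at $\tau$ itself: the only leading contribution coming from $c_\sigma r_\sigma$ with $\sigma = \tau$ equals $p c_\tau$, and any tail contribution would have to come from some $\sigma$ with $c_\sigma \ne 0$ and tail index $\tau$, forcing $|\sigma| = |\tau| + 1 > |\tau|$, hence $\sigma > \tau$, contradicting the maximality of $\tau$. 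Therefore the coordinate at $\tau$ is exactly $p c_\tau$.

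Combining these, $\tau$ is indeed the leading index of $r$, and the leading coordinate is $p c_\tau \in p \ZZ_p \setminus \{0\}$, which completes the argument. There is no real obstacle here — the only thing one has to be careful about is not confusing ``the deg-lex maximum among $\sigma$ with $c_\sigma \neq 0$'' with ``the leading index of the element $r$''; the whole point of looking at the former is that tails of generators $r_\sigma$ can a priori spoil the leading coordinate, but the length-drop in the deg-lex order rules that out.
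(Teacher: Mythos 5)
Your proof is correct and takes essentially the same approach as the paper's: write the element as a finite $\ZZ_p$-linear combination of the generators $r_\sigma$, take the deg-lex-maximal index $\tau$ with nonzero coefficient, and conclude that the leading coordinate is $p c_\tau$. You merely spell out the verification (that tail terms have strictly shorter length and therefore cannot contribute at or above $\tau$) which the paper leaves implicit.
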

\begin{proof}
Any element of $r\in R_\alpha\setminus \{0\}$  can be presented as $\sum_{i=1}^n x_ir_{\sigma_i}$ such that $n\geq 1, x_i\in \ZZ_p\setminus\{0\}$ and $\sigma_1<\dots<\sigma_n.$ Then ${\rm li}(r)=\sigma_n$ and the leading coefficient is equal to $px_n.$
\end{proof}

\begin{lemma}
\label{lemma:presentation}
Any element $t\in P'_\alpha$ can be uniquely presented as 
\begin{equation}
t=t_0+r    
\end{equation}
such that $r\in R_\alpha, t_0\in P'_\alpha$ and all coordinates of $t_0$ are in $\{0,\dots,p-1\}.$ Moreover, for this presentation we have  ${\rm li}(t_0), {\rm li}(r)\leq {\rm li}(t)$ and, if $t\in P'_{[\beta,\alpha)},$ then $t_0\in P'_{[\beta,\alpha)}.$
\end{lemma}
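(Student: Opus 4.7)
The plan is to establish existence by transfinite induction on the leading index ${\rm li}(t)$, which is well-defined since $\alpha^\diamond$ is well-ordered by deg-lex and $t \in P'_\alpha$ has finite support. Each induction step is a ``$p$-adic Euclidean division'' at the leading coordinate that pushes the remaining non-normal content to strictly smaller indices. Uniqueness will follow separately from Lemma \ref{lemma:R}, which forces any nonzero element of $R_\alpha$ to have a leading coordinate divisible by $p$, while any difference of two normal forms has all coordinates in $\{-(p-1),\dots,p-1\}$.

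For existence, the case $t=0$ is trivial. For the inductive step, let $\sigma = {\rm li}(t) = (\alpha_1,\dots,\alpha_n)$ and let $x \in \ZZ_p$ be the leading coordinate. Write $x = qp + x_0$ with $q \in \ZZ_p$ and $x_0 \in \{0,\dots,p-1\}$. The element $t - qr_\sigma$ has $\sigma$-coordinate equal to $x_0$, changes the $\sigma'$-coordinate by $+q$ when $n\geq 2$ (where $\sigma' := (\alpha_2,\dots,\alpha_n)$), and coincides with $t$ on all other indices. Since $\sigma'$ has length $n-1 < n$, it is strictly smaller than $\sigma$ in deg-lex, and the same holds for the other indices in the support of $t$. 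Hence
\[
t - qr_\sigma = x_0 e_\sigma + t'', \qquad {\rm li}(t'') < \sigma,
\]
and the induction hypothesis yields a decomposition $t'' = t''_0 + r''$. Setting $t_0 := x_0 e_\sigma + t''_0$ and $r := qr_\sigma + r''$ gives the required presentation: every coordinate of $t_0$ lies in $\{0,\dots,p-1\}$ and $r \in R_\alpha$. The bounds ${\rm li}(t_0),{\rm li}(r)\leq\sigma$ come out of the induction, since ${\rm li}(t''_0),{\rm li}(r'')\leq {\rm li}(t'')<\sigma$ and ${\rm li}(qr_\sigma), {\rm li}(x_0 e_\sigma)\leq \sigma$. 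For the $P'_{[\beta,\alpha)}$-stability, if $\alpha_1\geq\beta$ then $\alpha_2\geq\beta$ as well, so both $qr_\sigma$ and therefore $t''$ stay in $P'_{[\beta,\alpha)}$; applying the inductive case of the ``moreover'' clause yields $t''_0\in P'_{[\beta,\alpha)}$, and hence $t_0\in P'_{[\beta,\alpha)}$.

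For uniqueness, suppose $t = t_0+r = t'_0+r'$ are two such presentations. Then $t_0 - t'_0 = r'-r\in R_\alpha$, and every coordinate of $t_0-t'_0$ is a difference of two elements of $\{0,\dots,p-1\}$, hence lies in $\{-(p-1),\dots,p-1\}$. If $t_0 \neq t'_0$, the leading coordinate of the nonzero element $r'-r$ would be a nonzero element of $\{-(p-1),\dots,p-1\}$, which is a unit in $\ZZ_p$; but by Lemma \ref{lemma:R} that leading coordinate must lie in $p\ZZ_p\setminus\{0\}$, a contradiction. Therefore $t_0 = t'_0$ and $r = r'$. The only delicate point in the whole argument is coordinating the three simultaneous claims (existence, the ${\rm li}$-bound, and $P'_{[\beta,\alpha)}$-stability) inside a single transfinite induction; this is painless once one notices that the reduction step touches only the indices $\sigma$ and $\sigma'$, both of which preserve the ``first entry $\geq\beta$'' property.
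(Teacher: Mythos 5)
Your proposal is correct and follows essentially the same route as the paper: the paper also normalizes the leading coordinate by subtracting a multiple of $r_{{\rm li}(t)}$, pushes the disturbance to the strictly smaller index $(\alpha_2,\dots,\alpha_n)$, and derives uniqueness from Lemma \ref{lemma:R} via the observation that nonzero elements of $\{-(p-1),\dots,p-1\}$ are units in $\ZZ_p$. The only cosmetic difference is that the paper phrases the well-founded induction on ${\rm li}(t)$ as a minimal-counterexample argument.
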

\begin{proof}
First we prove the existence of such presentation $t=t_0+r$ such that $r\in R_\alpha, t_0\in P'_\alpha$ and all coordinates of $t_0$ are in $\{0,\dots,p-1\}$ and ${\rm li}(t_0), {\rm li}(r)\leq {\rm li}(t)$. 
Assume the contrary, that there exists $t\in P'_\alpha$ such that there is no such a presentation. Chose such $t,$ for which there is no such a representation, so that ${\rm li}(t)$ is the least possible (it is possible because 
$\alpha^\diamond$ is well ordered). 
Then $t=xe_{{\rm li}(t)} + \tilde t,$ 
where $x\in \ZZ_p\setminus\{0\}$ and either $\tilde t=0,$ 
or $\tilde t\in P'_\alpha$ such that ${\rm li}(\tilde t)<{\rm li}(t).$ Note that if $t\in P'_{[\beta,\alpha)},$ then $\tilde t\in P'_{[\beta,\alpha)}.$ 
Let's present $x$ as $x=x_0+ p\tilde x,$ 
where $x_0\in \{ 0,\dots,p-1 \}$ and $\tilde x\in \ZZ_p.$ 
Then $t=x_0e_{{\rm li}(t)} + p\tilde x e_{{\rm li}(t)}  + \tilde t.$ If ${\rm li}(t)=(\alpha_1),$ then we set $r:=p\tilde x e_{\alpha_1}\in R_\alpha$ and $\widehat t:=\tilde t.$  If ${\rm li}(t)=(\alpha_1,\dots,\alpha_n)$ for $n\geq 2,$ then we set  $r:=p\tilde x e_{\alpha_1,\dots,\alpha_n} - \tilde x e_{\alpha_2,\dots,\alpha_n}\in R_\alpha,$ and $\widehat t:=\tilde t+\tilde x e_{\alpha_2,\dots,\alpha_n}.$ 
In both cases we obtain $t=x_0e_{{\rm li}(t)}+\widehat t+r,$ where $r\in R_\alpha$ and $\widehat t\in P'_\alpha$ such that ${\rm li}(\widehat t)<{\rm li}(t)$ and ${\rm li}(r)\leq {\rm li}(t).$ Note that, if $t\in P'_{[\beta,\alpha)},$ then $\widehat t\in P'_{[\beta,\alpha)}.$ 
By the assumption we can present $\widehat t$ as $\widehat t=\widehat t_0+\widehat r,$ where $\widehat r\in R_\alpha,$ $\widehat t_0\in P'_\alpha,$ ${\rm li}(\widehat r),{\rm li}(\widehat t_0)\leq {\rm li}(\widehat t)<{\rm li}(t)$ and all coordinates of $\widehat t_0$ are from $\{0,\dots,p-1\}.$ Therefore $t=(x_0e_{{\rm li}(t)}+\widehat t_0)+(r+\widehat r).$ We claim that this presentation satisfies all the assumptions. Indeed, since ${\rm li}(\widehat t_0)<{\rm li}(t),$ we get that the coordinates of $x_0e_{{\rm li}(t)}+\widehat t_0$ are in $\{0,\dots, p-1  \}.$ Other conditions are obvious. This makes a contradiction. So we proved the existence. 

Let us prove the uniqueness. If we have two presentations $t_0+r=t'_0+r'$  that $r,r'\in R_\alpha$ and all coordinates of $t_0,t_0'$ are in $\{0,\dots, p-1\},$ then all coordinates of $t_0-t'_0$ are in $\{-(p-1),\dots,p-1\}.$ Using Lemma \ref{lemma:R} and the fact that the sets $\{-(p-1),\dots,p-1\}$ and $p\ZZ_p\setminus \{0\}$ don't intersect, we obtain $t_0=t'_0.$ 
\end{proof}

\begin{lemma}\label{lemma:p^betaD'}
For any ordinal $\beta$ we have
\begin{equation}
p^\beta D'_\alpha =(p^\beta D_\alpha)\cap D'_\alpha= {\rm Im}(P'_{[\beta,\alpha)} \to D'_\alpha).
\end{equation}
\end{lemma}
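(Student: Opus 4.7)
Both equalities can be proved by transfinite induction on $\beta,$ and I would tackle them separately, starting with the image description $p^\beta D'_\alpha={\rm Im}(P'_{[\beta,\alpha)}\to D'_\alpha)$ and then deducing the intersection identity. The base case $\beta=0$ is immediate since $P'_{[0,\alpha)}=P'_\alpha,$ and the easy inclusion $p^\beta D'_\alpha\subseteq(p^\beta D_\alpha)\cap D'_\alpha$ follows by a routine transfinite induction using only that the embedding $D'_\alpha\hookrightarrow D_\alpha$ is $\mathbb{Z}_p$-linear.

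For the successor step of the image description, the relations \eqref{eq:R} assert that modulo $R_\alpha$ multiplication by $p$ shifts the smallest index of a basis element upward. Concretely, for $\sigma=(\gamma,\alpha_2,\dots,\alpha_n)$ the relation $r_\sigma$ gives $p[e_\sigma]=[e_{(\alpha_2,\dots,\alpha_n)}]\in{\rm Im}(P'_{[\gamma+1,\alpha)}\to D'_\alpha)$ (or $0$ when $n=1$), while for $e_\sigma\in P'_{[\gamma,\alpha)}$ with smallest index $>\gamma$ one already has $e_\sigma\in P'_{[\gamma+1,\alpha)};$ conversely, for $\sigma'=(\alpha_1,\dots,\alpha_n)$ with $\alpha_1\geq\gamma+1$ the relation applied to $(\gamma,\alpha_1,\dots,\alpha_n)$ shows $[e_{\sigma'}]=p[e_{(\gamma,\alpha_1,\dots,\alpha_n)}]\in p\cdot{\rm Im}(P'_{[\gamma,\alpha)}\to D'_\alpha).$ Combined with the inductive hypothesis this handles the successor step.

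The limit step $\lambda$ is the main obstacle. The inclusion $\supseteq$ is immediate from $P'_{[\lambda,\alpha)}\subseteq P'_{[\gamma,\alpha)}$ for each $\gamma<\lambda.$ For the reverse, given $x=[t]$ lying in ${\rm Im}(P'_{[\gamma,\alpha)}\to D'_\alpha)$ for every $\gamma<\lambda,$ I would choose for each $\gamma$ a representative $t_\gamma\in P'_{[\gamma,\alpha)}$ of $x$ and apply Lemma \ref{lemma:presentation} together with its \emph{moreover} clause to reduce $t_\gamma$ to an element $t_\gamma^{(0)}\in P'_{[\gamma,\alpha)}$ with coordinates in $\{0,\dots,p-1\}.$ Since $t-t_\gamma^{(0)}\in R_\alpha,$ the \emph{uniqueness} part of Lemma \ref{lemma:presentation} forces $t_\gamma^{(0)}$ to coincide with the canonical representative $t_0$ of $t$ for every $\gamma,$ hence $t_0\in\bigcap_{\gamma<\lambda}P'_{[\gamma,\alpha)}=P'_{[\lambda,\alpha)}$ (the last equality holds because elements of $P'_\alpha$ are finite sums, so their smallest indices are actual ordinals; being $\geq\gamma$ for all $\gamma<\lambda$ forces them to be $\geq\lambda$).

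To finish, I would prove $(p^\beta D_\alpha)\cap D'_\alpha\subseteq p^\beta D'_\alpha$ by transfinite induction. The limit step is immediate. For the successor step $\beta=\gamma+1,$ take $x=[t]\in D'_\alpha$ and $y=[u]\in p^\gamma D_\alpha$ with $u\in P_\alpha$ and $x=py.$ Then $pu-t\in R_\alpha\subseteq P'_\alpha,$ so $pu\in P'_\alpha;$ since $\mathbb{Z}_p$ is torsion-free, $p\cdot u_\sigma=0$ forces $u_\sigma=0,$ so only finitely many coordinates of $u$ are nonzero, giving $u\in P'_\alpha$ and $y\in D'_\alpha.$ By the inductive hypothesis $y\in p^\gamma D'_\alpha,$ hence $x=py\in p^{\gamma+1}D'_\alpha,$ completing the argument.
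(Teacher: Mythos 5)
Your proposal is correct and follows essentially the same route as the paper: the successor step via the relations \eqref{eq:R} (the paper's identity $pP'_{[\beta,\alpha)}+R_\alpha=P'_{[\beta+1,\alpha)}+R_\alpha$), the limit step via the unique reduced representative from Lemma \ref{lemma:presentation}, and the intersection identity via torsion-freeness of $\ZZ_p$ forcing a preimage into the direct sum. The only cosmetic difference is that you argue directly with images in $D'_\alpha$ where the paper manipulates submodules of $P'_\alpha$ containing $R_\alpha$ and invokes the lattice correspondence.
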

\begin{proof}  First we prove that for any ordinal $\beta$ we have
\begin{equation}\label{eq:P'_geq1}
 pP'_{[\beta,\alpha)} +R_\alpha =  P'_{[\beta+1,\alpha)}+R_\alpha.
\end{equation}
In order to prove this equation, we need to check two inclusions $pP'_{[\beta,\alpha)} \subseteq P'_{[\beta+1,\alpha)}+R_\alpha$ and $   P'_{[\beta+1,\alpha)}\subseteq pP'_{[\beta,\alpha)} +R_\alpha.$ Both of them follow from the fact that $pe_{\beta}\in R_\alpha$ and $pe_{\beta,\alpha_2,\dots,\alpha_n} - e_{\alpha_2,\dots,\alpha_n} \in R_\alpha$ for any $\beta+1\leq \alpha_2<\dots<\alpha_n<\alpha, n\geq 2.$ 

It is easy to see that  $\bigcap_{\beta<\lambda} P'_{[\beta,\alpha)} = P'_{[\lambda,\alpha)}$ for any limit ordinal $\lambda.$ Further we claim that for any limit ordinal $\lambda$ we have
\begin{equation}\label{eq:P'_geq2}
\bigcap_{\beta<\lambda} (P'_{[\beta,\alpha)} + R_\alpha) = P'_{[\lambda,\alpha)} + R_\alpha.    
\end{equation}
The inclusion $\supseteq$ is obvious. Lets check the inclusion $\subseteq.$ Take an element $t$ from the intersection. By Lemma \ref{lemma:presentation} there is a unique presentation $t=t_{0}+r$ such that $r\in R_\alpha, t_0\in P'_\alpha,$ ${\rm li}(t_0),{\rm li}(r)\leq {\rm li}(t)$ and coordinates of $t_0$ are from $\{0,\dots,p-1\}.$ Moreover, since $t\in P'_{[\beta,\alpha)},$ then $t_0\in P'_{[\beta,\alpha)}$ for any $\beta<\lambda.$ Therefore, $t_0\in P'_{[\lambda,\alpha)}.$ The assertion follows. 

Let us prove that $p^\beta D'_\alpha = {\rm Im}(P'_{[\beta,\alpha)} \to D'_\alpha).$ The lattice of submodules of $D'_\alpha$ is isomorphic to the lattice of submodules of $P'_\alpha$ containing $R_\alpha.$ The isomorphism is given by taking the preimage. Comparing the definition of $p^\beta D'_\alpha$ and the formulas \eqref{eq:P'_geq1}, \eqref{eq:P'_geq2}, we see that $p^\beta D'_\alpha$ corresponds to $P'_{[\beta,\alpha)}+R_\alpha.$ 

Now let us prove by induction that $p^\beta D'_\alpha = (p^\beta D_\alpha) \cap D'_\alpha.$ For $\beta =0$ it is obvious. 
Assume that $p^\beta D'_\alpha = (p^\beta D_\alpha) \cap D'_\alpha$ and prove that $p^{\beta+1} D'_\alpha = (p^{\beta+1} D_\alpha) \cap D'_\alpha.$ 
The inclusion $\subseteq$ is obvious. 
Let us prove $\supseteq.$ Take $b\in (p^{\beta+1} D_\alpha) \cap D'_\alpha.$ Then $b=p\tilde b$ for $\tilde b\in p^\beta D_\alpha.$ 
Take a preimage $\tilde t\in P_\alpha$ of $\tilde b.$ 
Hence $p \tilde t $ is a preimage of $b\in D'_\alpha.$ 
Thus $p\tilde t\in P'_\alpha.$ 
Therefore $\tilde t\in P'_\alpha.$ 
It follows that $\tilde b\in D'_\alpha.$ 
Hence $\tilde b\in D'_\alpha \cap p^\beta D_\alpha = p^\beta D'_\alpha$ and $b=p \tilde b\in p^{\beta+1}D'_\alpha.$ So we proved $p^{\beta+1} D'_\alpha = (p^{\beta+1} D_\alpha) \cap D'_\alpha.$ Now assume that for a limit ordinal $\lambda$ for any $\beta<\lambda$ we have  $p^\beta D'_\alpha = (p^\beta D_\alpha) \cap D'_\alpha.$ Since intersection commutes with intersection, we obtain $p^\lambda D'_\alpha = (p^\lambda D_\alpha) \cap D'_\alpha.$ 
\end{proof}

\begin{remark}
Lemma \ref{lemma:p^betaD'} implies that $(p^\alpha D_\alpha)\cap D'_\alpha=0.$ However, in general $p^\alpha D_\alpha \ne 0.$ Moreover, we claim that 
\begin{equation}
p^\infty D_\omega \ne 0.    
\end{equation}
Let us give a sketch of a proof here. Consider the group $A=\ZZ^{\prod \omega}/\ZZ^{\bigoplus \omega}.$ It is easy to see that the element $(p,p^2,p^3,\dots)$ of $A$ lies in $p^\infty A.$ Hence $p^\infty A\ne 0.$ Consider a map $\varphi: A\to D_\omega$ defined by
$\varphi(n_0,n_1,\dots) = \sum_{i\in \omega} n_i p^{i+1} e_{0,1,\dots,i}.$ Here each summand of the ``infinite sum'' is equal to zero $p^{i+1}e_{0,1,\dots,i} =0$ but the whole ``infinite sum'' is not zero. It is easy to check that $\varphi$ is a well defined monomorphism. Therefore $p^\infty D_\omega \neq 0.$
\end{remark}

\begin{proof}[Proof of Theorem \ref{th:example}] First we prove that $S(E_\alpha)$ is local. By the definition of $E_\alpha$ we have $p^\alpha E_\alpha=0.$ Then \eqref{eq:limS(A)} implies that $\lim S(E_\alpha)=0.$ The inverse sequence $S(\ZZ_p)$ is local, because ${\rm len}(S(\ZZ_p))=\omega,$ $I^\omega(S(\ZZ_p))=0,$ $S(\ZZ_p)\to \widehat S^\omega (\ZZ_p)$ is an isomorphism (Theorem \ref{th:gen_ML}, Proposition \ref{prop:mu}). Since the class of local inverse sequences is closed with respect to small limits, we obtain that $S(P_\alpha)$ is also local. Using that there is an epimorphism $P_\alpha\epi E_\alpha,$ we obtain $\lim^1 S(E_\alpha)=0.$ Now we prove that ${\rm len}(S(E_\alpha))=\alpha.$ Equivalently, we need to prove that ${\rm len}_p(E_\alpha)=\alpha.$ Since $p^\alpha E_\alpha=0,$ we get ${\rm len}_p(E_\alpha)\leq \alpha.$ Now we need to prove that $p^\beta E_\alpha\ne 0$ for any $\beta<\alpha.$ It is sufficient to prove that $p^\beta D_\alpha\ne p^\alpha D_\alpha$ for any $\beta<\alpha.$ By Lemma \ref{lemma:p^betaD'} we have $e_\beta +R_\alpha \in  (p^\beta D_\alpha)\cap D'_\alpha$ and $(p^\alpha D_\alpha)\cap D'_\alpha=0.$  Lemma \ref{lemma:presentation} implies that $e_\beta \notin R_\alpha.$ Therefore $(p^\beta D_\alpha)\cap D'_\alpha\neq (p^\alpha D_\alpha)\cap D'_\alpha.$
\end{proof}

\subsection{A local inverse sequence, which is not complete with respect to a regular cardinal}

In this subsection for any regular uncountable cardinal $\kappa$ we will construct an abelian group $A$ such that $S(A)$ is local and not $\kappa$-complete. 

Let $\kappa$ be a regular cardinal. For a family of abelian groups $(A_x)_{x\in X}$ we define its $\kappa$-supported product as the subgroup $\prod_{x\in X}^{(\kappa)}A_x$ of the product $\prod_{x\in X}A_x$ consisting of elements whose support has cardinality $<\kappa.$ In more categorical terms we can define the $\kappa$-supported product as the $\kappa$-filtered colimit of products taken by all subsets of $X$ of cardinality $<\kappa$
\begin{equation}
{\prod}_{x\in X}^{(\kappa)}A_x = \underset{K\subset_{\kappa}X}{\rm colim} \left( {\prod}_{x\in K} A_x \right).
\end{equation}
For example, the direct sum is the $\aleph_0$-supported direct product. It is easy to see check that for any ordinal $\alpha$ we have

\begin{lemma}\label{lemma:kappa-prod} The $\kappa$-supporting product satisfies the following elementary properties.
\begin{enumerate}
\item For any ordinal $\alpha$ the following holds
\[
p^\alpha \left({\prod}_{x\in X}^{(\kappa)}A_x\right) = {\prod}_{x\in X}^{(\kappa)} (p^\alpha A_x).
\]

\item If $(A_{x,y})_{(x,y)\in X\times Y}$ is a family indexed by product $X\times Y$, then there is an isomorphism
\[ {\prod}_{x\in X}^{(\kappa)} \left ({\prod}_{y\in Y}^{(\kappa)} A_{x,y}\right) \cong {\prod}_{(x,y)\in X\times Y}^{(\kappa)} A_{x,y}  \cong  {\prod}_{y\in Y}^{(\kappa)} \left ({\prod}_{x\in X}^{(\kappa)} A_{x,y}\right).\]

\item  If $|Y|<\kappa,$ then 
\[ {\prod}_{x\in X}^{(\kappa)} \left ({\prod}_{y\in Y} A_{x,y}\right)  \cong  {\prod}_{y\in Y} \left ({\prod}_{x\in X}^{(\kappa)} A_{x,y}\right).\]

\item If $J$ is a small category such that $|{\rm Ob}(J)|,|{\rm Mor}(J)|<\kappa,$ then for any family of functors to the category of abelian groups $(A_x: J \to {\rm Ab})_{x\in X}$ there is a natural isomorphism 
\[ {\prod}_{x\in X}^{(\kappa)} \left (\lim_J A_x \right)  \cong  \lim_J\left ({\prod}_{x\in X}^{(\kappa)} A_{x,y}\right).\]

\item If $\kappa$ is uncountable cardinal and there is a family of inverse sequences $(S_x)_{x\in X}$, then there is a natural isomorphism 
\[
{\lim}^1 \left({\prod}_{x\in X}^{(\kappa)} S_x \right) \cong {\prod}_{x\in X}^{(\kappa)} \left( {\lim}^1 S_x \right).  
\]
\end{enumerate}
\end{lemma}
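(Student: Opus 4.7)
The plan is to proceed through the five items in order, with a single combinatorial observation driving most of the work: because $\kappa$ is regular, a union of strictly fewer than $\kappa$ sets, each of cardinality $<\kappa$, still has cardinality $<\kappa$. This alone essentially proves parts (2) and (3), where one identifies an element of each side with its support in the combined index set and observes that the two ``$\kappa$-smallness'' conditions coincide: in (2) the global support of an iterated element is a $<\kappa$-indexed union of $<\kappa$-size sets, hence is itself of size $<\kappa$; in (3) the total support is a $|Y|$-indexed union of $<\kappa$-size sets, which again has size $<\kappa$ because $|Y|<\kappa$.

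For (1) I would use transfinite induction on $\alpha$. In the successor step I lift an element of $\prod_{x\in X}^{(\kappa)} p^{\alpha+1} A_x$ to a $p$-th root coordinatewise, choosing $0$ outside its support so the support does not grow; this gives the nontrivial inclusion. In the limit step the intersection of $\kappa$-supported subgroups is automatically $\kappa$-supported, since the intersection's support is contained in every member's support.

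For (4) I would express $\lim_J A_x$ as the equalizer of the standard pair $\prod_{j\in {\rm Ob}(J)} A_x(j) \rightrightarrows \prod_{f\in {\rm Mor}(J)} A_x(t(f))$. Since both index sets have cardinality $<\kappa$, part (3) lets me commute $\prod_{x\in X}^{(\kappa)}$ past each product. The equalizer, being the kernel of a difference map, then commutes with $\prod_{x\in X}^{(\kappa)}$ because the latter is left exact; this left exactness is checked directly, as the kernel of a pointwise map $(b_x)\mapsto(f_x(b_x))$ inside $\prod_{x\in X}^{(\kappa)} B_x$ is precisely $\prod_{x\in X}^{(\kappa)} \ker f_x$.

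Part (5) is the subtlest and is where uncountability of $\kappa$ becomes essential, precisely so that part (3) applies with $Y=\omega$. I would first observe that $\prod_{x\in X}^{(\kappa)}$ is right exact as well: a componentwise surjection is lifted coordinatewise, taking $0$ outside the support of the target, so $\prod_{x\in X}^{(\kappa)}$ is an exact functor. I would then apply this exact functor to the four-term exact sequence \eqref{eq:lim_es} for each $S_x$, use (3) with $Y=\omega$ to commute $\prod_{x\in X}^{(\kappa)}$ past the two copies of $\prod_i$, and compare term by term with \eqref{eq:lim_es} for $\prod_{x\in X}^{(\kappa)} S_x$; the middle map $1-F$ is natural in $S$, so the comparison on the middle maps is automatic, and reading off the outer terms yields the desired isomorphism. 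The main obstacle is just ensuring this last commutation step is legitimate, which is exactly where the hypothesis $\kappa>\aleph_0$ enters.
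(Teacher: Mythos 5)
Your proposal is correct and follows essentially the same route as the paper: regularity of $\kappa$ for (2) and (3), transfinite induction for (1), the equalizer presentation of $\lim_J$ for (4), and the four-term exact sequence \eqref{eq:lim_es} together with (3) for (5). You supply more detail than the paper's one-line proofs (notably the explicit verification that ${\prod}^{(\kappa)}$ is exact, which the paper uses implicitly in (4) and (5)), but the underlying argument is the same.
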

\begin{proof}
(1). Straightforward proof by transfinite induction. 

(2). Follows from the fact that $\kappa$ is regular, and hence,  a union of sets of cardinality $<\kappa$ indexed by a set of cardinality $<\kappa$ has cardinality $<\kappa.$

(3). Follows from (2).

(4). It follows from (3) and the fact that $\lim_J$ can be presented as the equalizer of two natural maps $\prod_{j\in {\rm Ob}(J)} A_x(j) \to \prod_{\alpha\in {\rm Mor}(J)} A_x({\rm cod}(\alpha)).$

(5). It follows from (3) and the exact sequence \eqref{eq:lim_es}.  
\end{proof}

$\kappa$-supported products of inverse sequences are defined component-wise. 

\begin{corollary}\label{cor:supported_product}
Let $\kappa$ be a regular uncountable cardinal and $(S_{x})_{x\in X}$ be a family of local inverse sequences of abelian groups. Then the $\kappa$-supported product ${\prod}^{(\kappa)}_{x\in X} S_{x}$ is local.
\end{corollary}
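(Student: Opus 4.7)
The plan is to verify the two conditions defining locality, namely $\lim(\prod^{(\kappa)}_{x\in X} S_x)=0$ and $\lim^1(\prod^{(\kappa)}_{x\in X} S_x)=0$, by a direct invocation of Lemma \ref{lemma:kappa-prod}. The key observation is that both limits are taken over the small category $\omega^{\scaleobj{0.7}{\rm op}}$, whose object set and morphism set are countable and hence, since $\kappa$ is uncountable, both have cardinality $<\kappa$. This brings the computation squarely into the hypothesis of part (4) of the lemma.

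First, I would apply Lemma \ref{lemma:kappa-prod}(4) with $J=\omega^{\scaleobj{0.7}{\rm op}}$ and the functors $S_x:\omega^{\scaleobj{0.7}{\rm op}}\to {\rm Ab}$ to commute $\lim$ past the $\kappa$-supported product, yielding
\[
\lim \Bigl({\prod}_{x\in X}^{(\kappa)} S_x\Bigr) \;\cong\; {\prod}_{x\in X}^{(\kappa)} (\lim S_x).
\]
Since each $S_x$ is local, $\lim S_x=0$ for every $x$, so the right-hand side is the $\kappa$-supported product of zero groups, which is zero. Thus $\lim(\prod^{(\kappa)} S_x)=0$.

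Next, I would apply Lemma \ref{lemma:kappa-prod}(5), which is precisely tailored to the present situation (requiring $\kappa$ uncountable), to obtain
\[
{\lim}^1 \Bigl({\prod}_{x\in X}^{(\kappa)} S_x\Bigr) \;\cong\; {\prod}_{x\in X}^{(\kappa)} \bigl({\lim}^1 S_x\bigr).
\]
Again, since each $S_x$ is local, $\lim^1 S_x=0$, and the right-hand side vanishes.

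There is essentially no obstacle here: the whole corollary is a packaging of parts (4) and (5) of the preceding lemma, where the uncountability of $\kappa$ is exactly what ensures that the countable index category $\omega^{\scaleobj{0.7}{\rm op}}$ fits the hypothesis. The only thing worth flagging is that one must cite part (5) rather than try to redo the $\lim^1$ computation from the exact sequence \eqref{eq:lim_es} by hand, since the product $\prod^{(\kappa)}$ is not a literal product and swapping $\lim^1$ with it requires the exactness argument already carried out in the lemma.
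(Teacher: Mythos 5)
Your proof is correct and is exactly the derivation the paper intends: the corollary is stated immediately after Lemma \ref{lemma:kappa-prod} precisely so that parts (4) (with $J=\omega^{\scaleobj{0.7}{\rm op}}$, which has fewer than $\kappa$ objects and morphisms since $\kappa$ is uncountable) and (5) give the commutation of $\lim$ and $\lim^1$ with the $\kappa$-supported product, after which locality of each $S_x$ finishes the argument.
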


\begin{theorem}\label{th:example2} Let $\kappa$ be a regular uncountable cardinal,  $A=\prod^{(\kappa)}_{\alpha<\kappa} E_{\alpha+1}$ and $S=S(A)$. Then $S$ is local, not $\kappa$-complete and ${\rm len}(S)=\kappa$. 
\end{theorem}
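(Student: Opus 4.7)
The plan is to handle the three claims (locality, length $\kappa$, failure of $\kappa$-completeness) separately. Locality is immediate: $S(A)$ coincides level-wise with the $\kappa$-supported product $\prod^{(\kappa)}_{\alpha<\kappa} S(E_{\alpha+1})$, since multiplication by $p$ is taken coordinatewise. Each factor is local by Theorem \ref{th:example}, so Corollary \ref{cor:supported_product} gives that $S$ is local.

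For the length, since ${\rm len}(S(A))={\rm len}_p(A)$, I reduce to computing ${\rm len}_p(A)$ and use Lemma \ref{lemma:kappa-prod}(1) to identify $p^\alpha A$ with $\prod^{(\kappa)}_{\beta<\kappa} p^\alpha E_{\beta+1}$. For $\alpha<\kappa$, the equality ${\rm len}_p(E_{\alpha+1})=\alpha+1$ gives $p^\alpha E_{\alpha+1}\ne 0$ while $p^{\alpha+1}E_{\alpha+1}=0$, so a vector supported on the single coordinate $\alpha$ witnesses $p^\alpha A\ne p^{\alpha+1}A$. On the other hand, $\beta<\kappa$ forces $\beta+1<\kappa$, whence $p^\kappa E_{\beta+1}=0$ for every $\beta$, so $p^\kappa A=0$. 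Together these yield ${\rm len}_p(A)=\kappa$.

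The main obstacle is the failure of $\kappa$-completeness, and the idea is to show that already at level $0$ the morphism $A\to \lim_{\beta<\kappa} A/p^\beta A$ fails to be surjective, which obstructs $S\to \widehat S^\kappa$ from being an epimorphism of inverse sequences. I would construct an explicit non-liftable thread as follows: for each $\alpha<\kappa$ pick $v_\alpha\in p^\alpha E_{\alpha+1}\setminus\{0\}$ (possible because ${\rm len}_p(E_{\alpha+1})=\alpha+1$), and for each $\beta<\kappa$ let $t_\beta$ be the class modulo $p^\beta A$ of the element of $A$ with coordinate $v_\alpha$ at position $\alpha<\beta$ and $0$ elsewhere. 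This representative lies in the $\kappa$-supported product because $|\beta|<\kappa$, and compatibility of the thread follows from Lemma \ref{lemma:kappa-prod}(1): the difference between the representatives at levels $\beta<\beta'$ has nonzero coordinates $v_\alpha$ only for $\beta\leq\alpha<\beta'$, each of which lies in $p^\alpha E_{\alpha+1}\subseteq p^\beta E_{\alpha+1}$. To finish, I argue that any lift $a\in A$ would have support of cardinality $<\kappa$, hence by regularity and uncountability of $\kappa$ bounded by some $\beta^*<\kappa$, forcing $a_{\beta^*}=0$; but the required congruence $a\equiv t_{\beta^*+1}\pmod{p^{\beta^*+1}A}$ evaluated at coordinate $\beta^*$ would force $v_{\beta^*}\in p^{\beta^*+1}E_{\beta^*+1}=0$, contradicting $v_{\beta^*}\ne 0$. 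This delivers the failure of surjectivity and completes the plan.
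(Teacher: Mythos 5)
Your proposal is correct and follows essentially the same route as the paper: locality via Corollary \ref{cor:supported_product} and Theorem \ref{th:example}, the length via Lemma \ref{lemma:kappa-prod}(1) and ${\rm len}_p(E_{\alpha+1})=\alpha+1$, and non-$\kappa$-completeness by exhibiting the very same thread (the paper writes it as a doubly indexed family $(x_\alpha+p^\beta E_{\alpha+1})_{\alpha,\beta}$ and rules out a lift by a support condition on the image, where you rule it out by evaluating at a coordinate beyond the support of a putative lift). The differences are purely presentational.
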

\begin{proof} In the proof we use  simplified notations   $E_\alpha^\beta = E_\alpha/p^\beta E_\alpha$ and $A^\beta = A/p^\beta A.$ Corollary \ref{cor:supported_product} and Theorem \ref{th:example} imply that $S(A)$ is local. Lemma \ref{lemma:kappa-prod}(1) implies that for any ordinal $\beta$ we have $p^\beta A= {\prod}^{(\kappa)}_{\alpha<\kappa} p^\beta E_{\alpha+1}$ and $A^\beta = {\prod}^{(\kappa)}_{\alpha<\kappa} E^\beta_{\alpha+1}.$ Since ${\rm len}_p(E_{\alpha+1})=\alpha+1,$ we obtain that ${\rm len}(A)=\kappa.$  We need to prove that $A\to \widehat A^\kappa$ is not surjective. Any element of $A$ is a family $(e_\alpha)_{\alpha<\kappa},$ where $e_\alpha\in E_{\alpha+1}$ and $|\{\alpha\mid e_\alpha\ne 0\}|<\kappa.$ Using the isomorphism 
\begin{equation}
\widehat A^\kappa \cong \lim_{\beta<\kappa} \left( {\prod}^{(\kappa)}_{\alpha<\kappa} E^\beta_{\alpha+1} \right) 
\end{equation}
we can present elements of $\widehat A^\kappa$ as families $(f_{\alpha,\beta})_{\alpha,\beta<\kappa}
$ such that
\begin{itemize}
    \item[(i)] $f_{\alpha,\beta}\in E^\beta_{\alpha+1};$
    \item[(ii)]  for any  $\beta' < \beta < \kappa$ and any $\alpha$ the map $E^\beta_{\alpha+1} \to E^{\beta'}_{\alpha+1}$ sends $f_{\alpha,\beta}$ to $f_{\alpha,\beta'};$
    \item[(iii)] for any $\beta<\kappa$ we have $|\{\alpha\mid 
f_{\alpha,\beta}\ne 0 \}|<\kappa.$
\end{itemize}
Then the map $A\to \widehat A^\kappa$ sends $(e_\alpha)_{\alpha<\kappa}$ to 
$(e_{\alpha}+p^\beta E_{\alpha+1})_{\alpha,\beta}.$
Using this description it is easy to see any element $(f_{\alpha,\beta})$ from the image $A\to \widehat A^\kappa$  satisfies the following property 
\begin{itemize}
    \item[(iv)]  $|\{\alpha \mid \exists \beta : f_{\alpha,\beta}\ne 0  \}|<\kappa.$
\end{itemize}
Now we construct an element from $\widehat A^\kappa$ which is not from the image of $A\to \widehat A^\kappa.$ Take a non-zero element $x_\alpha\in p^\alpha E_{\alpha+1}$ and consider a family $(g_{\alpha,\beta})$ such that $g_{\alpha,\beta}=x_{\alpha}+p^\beta E_{\alpha+1}.$ The family $(g_{\alpha,\beta})$ lies in $\widehat A^\kappa$ because it satisfies (i),(ii),(iii).  But it does not lie in the image of $A\to \widehat A^\kappa$ because $\{\alpha \mid \exists \beta : g_{\alpha,\beta}\ne 0  \}=\kappa$ and it does not satisfy (iv).
\end{proof}

\begin{corollary}
If $A=\prod^{(\aleph_1)}_{\alpha<\aleph_1} E_{\alpha+1},$ then $S(A)$ is local and $\lambda$-complete for any limit ordinal $\lambda\neq \aleph_1,$ but not $\aleph_1$-complete. 
\end{corollary}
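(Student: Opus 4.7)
The plan is to invoke Theorem \ref{th:example2} with $\kappa=\aleph_1$ for the ``hard'' content and then treat the remaining $\lambda$ by two easy cases. Since $\aleph_1$ is a regular uncountable cardinal, Theorem \ref{th:example2} directly gives that $S(A)$ is local, that ${\rm len}(S(A))=\aleph_1$, and that $S(A)$ is not $\aleph_1$-complete. What remains is to check $\lambda$-completeness for every limit ordinal $\lambda\neq\aleph_1$, which I would split into $\lambda<\aleph_1$ and $\lambda>\aleph_1$.

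For a limit ordinal $\lambda<\aleph_1$, the ordinal $\lambda$ is countable, hence has countable cofinality. Since $\lim^1 S(A)=0$ by locality, the corollary of Theorem \ref{th:gen_ML} asserting that ``$\lim^1 S=0$ and ${\rm cf}(\lambda)$ countable imply $S$ is $\lambda$-complete'' applies at once.

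For a limit ordinal $\lambda>\aleph_1$ I would instead use the length bound. Because $\lim S(A)=0$, Corollary \ref{cor:limS=0} yields $I^{\aleph_1}(S(A))=I^{{\rm len}(S(A))}(S(A))=0$; and by Proposition \ref{prop:mu} the image filtration has stabilized beyond $\aleph_1$, so $I^\alpha(S(A))=0$ and $S(A)^\alpha=S(A)$ with identity transition maps for every $\alpha\geq\aleph_1$. The subset $\{\beta:\aleph_1\leq\beta<\lambda\}$ is cofinal in $\{\beta:\beta<\lambda\}$, so the defining inverse limit restricts to the constant inverse system at $S(A)$ on this tail. Consequently $\widehat{S(A)}^\lambda\cong S(A)$ and the canonical map $S(A)\to\widehat{S(A)}^\lambda$ is an isomorphism; in particular $S(A)$ is $\lambda$-complete.

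I do not anticipate any real obstacle: the argument is pure bookkeeping on top of Theorem \ref{th:example2}. The only mildly subtle point is observing that the constraint ``$\lambda\neq\aleph_1$'' splits cleanly into ``below $\aleph_1$'' (where the cofinality is automatically countable and Theorem \ref{th:gen_ML} finishes) and ``above $\aleph_1$'' (where the image filtration has already stabilized at $\aleph_1$ and the completion $\widehat{S(A)}^\lambda$ collapses to $S(A)$ trivially), with no need to worry about the actual cofinality of $\lambda$ in this second range.
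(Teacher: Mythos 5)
Your proposal is correct and is exactly the intended derivation: the paper states this corollary without proof as an immediate consequence of Theorem \ref{th:example2} (applied with $\kappa=\aleph_1$), the corollary of Theorem \ref{th:gen_ML} for limit ordinals of countable cofinality (covering $\lambda<\aleph_1$), and the stabilization ${\rm len}(S)=\aleph_1$ together with Corollary \ref{cor:limS=0}, which forces $S^\beta=S$ on a cofinal tail and hence $\widehat S^\lambda\cong S$ for $\lambda>\aleph_1$. Both of your cases check out, including the observation that the tail argument for $\lambda>\aleph_1$ works regardless of the cofinality of $\lambda$.
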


\printbibliography
\end{document}